\theoremstyle{plain}
\newtheorem{thm}{Theorem}[section]
\newtheorem{prop}[thm]{Proposition}
\newtheorem{lem}[thm]{Lemma}
\newtheorem*{thm*}{Theorem}
\theoremstyle{definition}
\newtheorem{defi}[thm]{Definition}
\newtheorem*{NaC}{Notation and Conventions}
\newtheorem{nota}{Notation}
\newtheorem{assu}{Assumption}
\theoremstyle{remark}
\newtheorem*{claim*}{Claim}
\newtheorem*{claimproof*}{Proof of Claim}
\numberwithin{equation}{subsection}
\newcommand{\Z}{\mathbb{Z}}
\newcommand{\R}{\mathbb{R}}
\newcommand{\C}{\mathbb{C}}
\newcommand{\A}{\mathbb{A}}
\renewcommand{\P}{\mathbb{P}}
\newcommand{\F}{\mathbb{F}}
\newcommand{\G}{\mathbb{G}}
\newcommand{\vp}{\varphi}
\newcommand{\mc}{\mathcal}
\newcommand{\tb}{\textbf}
\newcommand{\wt}{\widetilde}
\DeclareMathOperator{\Aut}{Aut}
\DeclareMathOperator{\Sym}{\mathrm{Sym}}
\DeclareMathOperator{\PProj}{\tb{Proj}}
\DeclareMathOperator{\Supp}{Supp}
\DeclareMathOperator{\Pic}{\mathrm{Pic}}
\DeclareMathOperator{\Bl}{\mathrm{Bl}}
\DeclareMathOperator{\Sing}{\mathrm{Sing}}
\title[$\G_{a}^{3}$-structures on del Pezzo fibrations]
{$\G_{a}^{3}$-structures on del Pezzo fibrations}
\author[M. Nagaoka]{Masaru Nagaoka}
\address{Graduate School of Mathematical Sciences\\The University of Tokyo\\3-8-1 Komaba\\Meguro-ku, Tokyo 153-8914, Japan}
\email{nagaoka@ms.u-tokyo.ac.jp}
\subjclass[2010]{14E30, 14J30, 14M17, 14M27 ,14R10}
\keywords{the vector groups; compactifications; del Pezzo fibrations}
\begin{document}
\begin{abstract}
In this paper we prove that del Pezzo fibrations admit $\G_{a}^{3}$-structures if and only if they are $\P^2$-bundles over $\P^{1}$.
\end{abstract}
\maketitle
\tableofcontents

\section{Introduction}
\label{sec:intro}


We work over the field of complex numbers $\C$.
Let $\G_{a}^{n}$ be the $n$-dimensional vector group, i.e., the $n$-dimensional affine space $\A^{n}$ equipped with the additive group structure.
In this paper, we are interested in equivariant compactifications of $\G_{a}^{n}$ in the following sense.

\begin{defi}[{\cite[Definition 2.1]{H-T99}}]
Let $\G$ be a connected linear algebraic group.
A $\G$-variety $X$ is a variety with a fixed (left) $\G$-action such that the stabilizer of a general point is trivial and the orbit of a general point is dense. 
\end{defi}

We note that the dense open orbit of a $\G$-variety is isomorphic to $\G$.
By \textit{a $\G$-structure on $X$ with the boundary divisor $D$}, we mean a $\G$-action on $X$ which makes $X$ a $\G$-variety whose dense open orbit is $X \setminus D$.

B.\ Hassett and Y.\ Tschinkel \cite{H-T99} considered $\G_{a}^{n}$-varieties originally, and classified all the smooth projective $\G_{a}^{n}$-varieties with the second Betti number $B_{2}=1$ when $n \leq 3$.
Since smooth rational projective varieties with $B_{2}=1$ are Fano, we can rephrase their result as the classification of all the smooth Fano $\G_{a}^{n}$-varieties with $B_{2}=1$ when $n \leq 3$.
After that, Z.\ Huang and P.\ Montero \cite{H-M} classified all the smooth Fano $\G_{a}^{3}$-varieties with $B_{2} \geq 2$.
B.\ Fu and P.\ Montero \cite{F-M} also classified all the smooth Fano $\G_{a}^{n}$-varieties with Fano index at least $n-2$ for any dimension.

In this paper, we consider smooth projective $\G_{a}^{3}$-varieties with $B_{2}=2$, which are not necessarily Fano.
Take such a variety $X$, which is rational by definition.
By virtue of the Mori theory, it has an extremal contraction $f \colon X \to C$, i.e., a surjective morphism to a normal projective variety $C$ such that $f_{*}\mc{O}_{X} \cong \mc{O}_{C}$, the relative Picard number is one and $-K_{X}$ is $f$-ample.
When $C$ is a curve, we call $f$ \textit{a del Pezzo fibration}.
In this case, \textit{the degree of $f$} is the anti-canonical volume of a general $f$-fiber, which is a del Pezzo surface. 
By \cite[Theorem 3.5]{Mor} the degree is at most nine, and $f$ is a $\P^{2}$-bundle when the degree is nine.

The main theorem of this paper is the following, which classifies smooth projective $\G_{a}^{3}$-varieties with del Pezzo fibration structures.

\begin{thm}\label{thm:maineq}
Let $X$ be a smooth projective $3$-fold, $D$ a reduced effective divisor on $X$ and $f \colon X \to C$ a del Pezzo fibration.
Then the following are equivalent.
\begin{enumerate}
\item[\textup{(1)}] $X$ has a $\G_{a}^{3}$-structure with the boundary divisor $D$.
\item[\textup{(2)}] $f$ is a $\P^{2}$-bundle over $\P^{1}$ and $D$ consists of a sub $\P^{1}$-bundle $D_{1}$ and a $f$-fiber $D_{2}$ which generate the cone of effective Cartier divisors of $X$.
\end{enumerate}
\end{thm}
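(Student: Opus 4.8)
The plan is to prove $(2)\Rightarrow(1)$ by explicit construction and $(1)\Rightarrow(2)$ by a structural analysis of the boundary and the fibration. For $(2)\Rightarrow(1)$, suppose $f\colon X\to\P^1$ is a $\P^2$-bundle, say $X=\P(\mc{E})$ for a rank-$3$ bundle $\mc{E}$ on $\P^1$. I would first normalize $\mc{E}$ by twisting so that $\mc{E}=\mc{O}\oplus\mc{O}(a)\oplus\mc{O}(b)$ with $0\le a\le b$; the hypothesis that a sub-$\P^1$-bundle $D_1$ and a fiber $D_2$ generate $\Eff(X)$ should force the splitting type to be of a restricted shape (one expects essentially $\mc{E}=\mc{O}(a)\oplus\mc{O}(a)\oplus\mc{O}(b)$ or similar, so that $X\setminus(D_1\cup D_2)\cong\A^3$). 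Then I would exhibit the $\G_a^3$-action directly: on the open set $\A^3=X\setminus D$ the additive group $\G_a^3$ acts by translation in suitable affine coordinates, and one checks this extends to a regular action on all of $X$ by writing it in the homogeneous/bundle coordinates — the unipotent one-parameter subgroups act by adding multiples of sections, which are polynomial and hence everywhere regular on $\P(\mc{E})$. This is the routine direction.

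For $(1)\Rightarrow(2)$, assume $X$ is a $\G_a^3$-variety with boundary $D$ and $f\colon X\to C$ a del Pezzo fibration. The first step is to see that $C\cong\P^1$: since $X$ is rational and $f_*\mc{O}_X=\mc{O}_C$ with $C$ a smooth projective curve, $C$ is rational, hence $\P^1$. Next, the complement $U:=X\setminus D\cong\G_a^3\cong\A^3$ is affine, so $D$ supports an ample divisor; in particular every $f$-fiber must meet $D$, and $D$ cannot contain a whole fiber "for free" — more precisely $D$ has at least one horizontal component. I would then analyze the horizontal part $D_1$ of $D$: restricting to a general fiber $F$ (a del Pezzo surface), $U\cap F=F\setminus(D_1\cap F)$ is an affine open subset of $F$ that is a $\G_a^2$-orbit closure complement — but $\G_a^2\cong\A^2$ sits in $F$ as the complement of a divisor, and the only del Pezzo surface admitting a $\G_a^2$-structure is $\P^2$ (this is the $n=2$ case of Hassett–Tschinkel, or can be extracted from it). Hence the general fiber is $\P^2$, so $\deg f=9$, and by the cited result of Mori the fibration $f$ is a $\P^2$-bundle over $\P^1$.

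It remains to pin down $D$. Since $f$ is a $\P^2$-bundle, $\rho(X)=2$ and $\Pic(X)$ is generated by a fiber class $D_2$ and the relative hyperplane class; the boundary $D$ of an $\A^3$-compactification inside $X$ must then consist of exactly two prime components whose classes generate $\Eff(X)$ (one vertical, namely a fiber $D_2$, and one horizontal $D_1$), and the horizontal one, meeting each fiber $\P^2$ in a line, is a sub-$\P^1$-bundle of $X\to\P^1$. To see $D_1$ is irreducible and a sub-bundle one uses that $U\cap F=\P^2\setminus(\text{line})=\A^2$ forces $D_1\cap F$ to be a single line in each general fiber, and flatness/connectedness arguments promote this to a genuine $\P^1$-subbundle; that $\langle D_1,D_2\rangle=\Eff(X)$ follows because $X\setminus(D_1\cup D_2)$ must already be the affine open orbit $\A^3$, leaving no room for further effective classes outside the cone they span.

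The main obstacle I expect is the step identifying the general fiber as $\P^2$ and, more delicately, controlling the boundary globally: a priori $D_1$ could be reducible, could fail to be a sub-bundle, or the $\G_a^3$-action need not be "fiberwise" — the induced action on the base $\P^1$ (through the quotient of $\G_a^3$ by the stabilizer of the generic fiber) must be understood, and one has to rule out that a one-parameter subgroup acts non-trivially on $\P^1$ in a way that obstructs the $\P^2$-bundle structure or creates extra boundary components. Handling this requires a careful look at how the unipotent group and its commutator/fixed-point structure interact with the two extremal contractions of $X$.
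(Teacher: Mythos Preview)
Your approach to $(1)\Rightarrow(2)$ breaks at the decisive step. You claim that ``the only del Pezzo surface admitting a $\G_a^2$-structure is $\P^2$'' and cite the $n=2$ case of Hassett--Tschinkel, but their result classifies only $\G_a^n$-varieties with $B_2=1$. The claim is simply false: $\P^1\times\P^1$ carries the obvious $\G_a^2$-structure by translation on each factor, and more generally the standard $\G_a^2$-action on $\P^2$ fixes the line at infinity pointwise, so blowing up any $k$ of its points yields a del Pezzo surface of degree $9-k$ with a $\G_a^2$-structure. Hence knowing that the general fibre inherits a $\G_a^2$-structure excludes neither degree $8$ nor degree $\le 7$.

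This is not a patchable detail; it is precisely where the work lies. The paper disposes of degree $\le 7$ by an entirely different mechanism: a $(-1)$-curve $l$ in the general fibre has $(-K_X\cdot l)=1$, which forces $-K_X$ to be a generator of $\Pic(X)$ modulo the fibre class and contradicts the coefficient bound $a_i\ge 2$ in $-K_X\sim a_1D_1+a_2D_2$. The degree-$8$ case (general fibre $\F_0$, which \emph{does} admit a $\G_a^2$-structure) is the genuinely hard one and takes the longest argument in the paper: one produces a $\G_a^3$-stable $f$-section---after a reduction via elementary links and the classification in \cite{Nag2}---then links to a $\P^2$-bundle blown up along a bisection $B$, and extracts a contradiction from the ramification of the equivariant double cover $B\to C\cong\P^1$. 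None of this global structure is visible from the fibrewise picture you propose.

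For $(2)\Rightarrow(1)$ your idea is sound, though your expectation that the effective-cone hypothesis restricts the splitting type is mistaken: every $\F(-d_1,-d_2,0)$ with $d_1\ge d_2\ge 0$ admits such a pair $(D_1,D_2)$, unique up to $\Aut(X)$. A direct construction does work---for instance $(a,b,c)\cdot[t_1{:}t_2;\,x_1{:}x_2{:}x_3]=[t_1:t_2+at_1;\,x_1+bt_1^{d_1}x_3:x_2+ct_1^{d_2}x_3:x_3]$ in Cox coordinates---and this is arguably simpler than the paper's route, which instead transports the $\G_a^3$-structure on $\P^1\times\P^2$ through a chain of elementary links to reach an arbitrary $\F(-d_1,-d_2,0)$.
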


This paper is structured as follows.
In \S \ref{sec:pre}, we recall some facts on actions of algebraic groups on algebraic varieties and elementary links between $\P^{2}$-bundles.
Using them, we prove that Theorem \ref{thm:maineq} (1) implies (2) in \S \ref{sec:12}.
The main step to prove this implication is Proposition \ref{prop:2}, that is, the exclusion of the case when the degrees of del Pezzo fibrations are eight.
For this, we use the results of \cite{Nag2}.
Finally, we prove the opposite implication in \S \ref{sec:21}.
For that, we construct a $\G_{a}^{3}$-structure for each $\P^{2}$-bundle $P$ over $\P^{1}$ via a sequence of elementary links from $\P^{1} \times \P^{2}$ to $P$.


\begin{NaC}\label{nota:2}
Throughout this paper, we follow \cite[Definition 1.6]{Nag2} for the definition of elementary links.
Also we use the following notation:
\begin{itemize}
\item $\F_{d}$: the Hirzebruch surface of degree $d$.
\item $\Supp Y$: the support of a closed subscheme $Y$ of an ambient variety.
\item $E_{f}$: the exceptional divisor with the reduced structure of a birational morphism $f$.
\item $Y_{\wt X}$: the strict transformation of a closed subscheme $Y$ of a normal variety $X$ in a birational model $\wt X$ of $X$.
\item $\Lambda_{\mathrm{eff}} (X) \subset \Pic (X) \otimes_{\Z} \R$: the cone of effective Cartier divisors on a projective variety $X$.
\end{itemize}
\end{NaC}

\section{Preliminaries}
\label{sec:pre}

In this section, we compile some facts on actions of algebraic groups on algebraic varieties and elementary links between $\P^{2}$-bundles, which will be needed in \S \ref{sec:12} and \S\ref{sec:21}.

\begin{thm}[{\cite[Theorem 2.5, 2.7]{H-T99}}]\label{thm:coeff}
Let $X$ be a normal proper $\G_{a}^{3}$-variety with the boundary divisor $D$ and $D= \cup_{i=1}^{n} D_{i}$ the irreducible decomposition.
Then we have the following:
\begin{enumerate}
\item[\textup{(1)}] $\Pic(X) = \bigoplus_{i=1}^{n} \Z D_{i}$.

\item[\textup{(2)}] $-K_{X} \sim \sum_{i=1}^{n} a_{i} D_{i}$ for some integers $a_{1}, \ldots, a_{n} \geq 2$. 

\item[\textup{(3)}]
$\Lambda_{\mathrm{eff}} (X)=\bigoplus_{i=1}^{n} \R_{\geq 0} D_{i}$.
\end{enumerate}
\end{thm}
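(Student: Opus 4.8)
\textbf{Proof proposal for Theorem \ref{thm:coeff}.}
The plan is to exploit the fundamental dichotomy that a $\G_a^3$-variety $X$ contains the open orbit $U \cong \G_a^3 \cong \A^3$, so that $X \setminus U = D = \bigcup_{i=1}^n D_i$ is precisely the complement, and $\A^3$ has trivial Picard group and only constant invertible global functions. First I would establish (1). Because $U \cong \A^3$ is an open subset whose complement is the union of the divisors $D_i$, the standard excision sequence for divisor class groups gives a surjection $\bigoplus_i \Z D_i \epm \Pic(X)$, the kernel being generated by the classes of principal divisors supported on $D$. Since $\mc{O}(U)^\times = \C^\times$ (every invertible regular function on affine space is a nonzero constant), there are no nonconstant rational functions on $X$ whose divisor is supported entirely on $D$; hence the relations vanish and the $D_i$ are linearly independent, yielding the free direct sum $\Pic(X) = \bigoplus_i \Z D_i$. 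I would take care to use normality of $X$ so that Weil and Cartier considerations match up and the restriction to the smooth locus (where $\Pic$ and $\Cl$ agree) is legitimate for a normal proper variety.

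For (3), the inclusion $\bigoplus_i \R_{\geq 0} D_i \subseteq \Lambda_{\mathrm{eff}}(X)$ is immediate since each $D_i$ is effective. For the reverse inclusion, let $E$ be any effective Cartier divisor; by (1) write $E \sim \sum_i m_i D_i$ with $m_i \in \Z$. The point is that any effective divisor on $X$ must have support inside $D$: its restriction to the open orbit $U \cong \A^3$ is a principal effective divisor $\div(g)$ for some regular $g \in \mc{O}(U)$, but since $U$ is $\G_a^3$ acting on itself the $\G_a^3$-translates sweep out $U$ densely, and an effective divisor meeting $U$ would, after averaging against the unipotent action, force a nonconstant invariant function, contradicting $\mc{O}(U)^\times = \C^\times$ together with the density of the orbit. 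Thus $E$ is supported on $D$, so $E = \sum_i m_i D_i$ as actual divisors with $m_i \geq 0$, giving $\Lambda_{\mathrm{eff}}(X) \subseteq \bigoplus_i \R_{\geq 0} D_i$.

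For (2), I would compute the anticanonical class directly on the open orbit. On $U \cong \G_a^3$ there is a translation-invariant (hence nowhere-vanishing) top-degree differential form $\omega$, namely $dx_1 \wedge dx_2 \wedge dx_3$ in linear coordinates; this trivializes $K_U$. Viewing $\omega$ as a rational section of the canonical sheaf on all of $X$, its divisor $\div(\omega) = K_X$ is supported on $D$, so by (1) we may write $-K_X \sim \sum_i a_i D_i$ for integers $a_i$, and the $a_i$ record (minus) the order of vanishing or pole of $\omega$ along each $D_i$. The nontrivial content is the inequality $a_i \geq 2$. To get this I would analyze the $\G_a^3$-action transversally to a general point of each boundary component $D_i$: choosing a one-parameter subgroup $\lambda \cong \G_a$ whose orbits cross $D_i$, the orbit closure gives a rational curve meeting $D_i$, and the invariance of $\omega$ under $\lambda$ forces $\omega$ to acquire a pole of order at least $2$ along $D_i$ (this is the familiar fact that the invariant vector field of the $\G_a$-action, dual to the coordinate, degenerates to order two at the boundary, equivalently that a $\G_a$-orbit compactifies to a rational curve $C$ with $-K_X \cdot C \geq 2$ by adjunction on the normalization $\P^1$).

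The main obstacle I expect is precisely the estimate $a_i \geq 2$ in part (2): parts (1) and (3) follow from soft arguments about the open orbit and the triviality of units and Picard group of affine space, but the lower bound on the anticanonical coefficients requires genuinely understanding the local structure of the compactification along the boundary. I would handle this by reducing to the one-dimensional picture: pick a general $\G_a$-orbit $\ell \subset U$ transverse to $D_i$, take its closure $\ol{\ell}$, and apply adjunction to the normalization $\nu \colon \P^1 \to \ol{\ell}$. Since $\ol{\ell}$ is a rational curve with at worst one or two boundary points on $D$, and $\deg K_{\P^1} = -2$, the pullback $\nu^*(-K_X) = \nu^*\sum_i a_i D_i$ must have degree at least $2$ concentrated at the boundary points; combined with the freeness of the $\G_a$-action (which guarantees the invariant form has no interior zeros or poles), this pins the coefficient at a crossed component to be at least $2$. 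The delicate point to verify carefully is that one can choose, for \emph{each} fixed $i$, a $\G_a$-orbit meeting $D_i$ and no other component at its relevant endpoint, so that the full degree-$2$ defect is charged to $a_i$ alone; this transversality is where I would spend the most effort, likely invoking the density of the open orbit and a general-position argument for the sweeping family of orbit closures.
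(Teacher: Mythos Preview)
The paper does not prove this theorem; it is quoted from Hassett--Tschinkel \cite{H-T99} without argument, so there is no in-paper proof to compare against. Your sketch for (1) is the standard one and is fine.

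Your argument for (3), however, contains a genuine error. You assert that ``any effective divisor on $X$ must have support inside $D$,'' but this is false: on $\P^2$ viewed as a $\G_a^2$-variety with boundary a line $H$, any other line is effective yet not supported on $H$. The vague ``averaging against the unipotent action'' does not repair this. The correct argument is: given an effective Cartier divisor $E$, the group $\G_a^3$ acts on the projective space $|E|$ (every line bundle on $X$ is $\G_a^3$-linearizable since $\G_a^3$ has no nontrivial characters), and by the Borel fixed-point theorem this unipotent action has a fixed point, i.e., there exists a $\G_a^3$-\emph{invariant} effective divisor $E' \in |E|$. An invariant divisor must be supported on $D$, so $E' = \sum m_i D_i$ with $m_i \geq 0$, and $E \sim E'$ gives the conclusion. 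You had the right instinct that unipotence is the key, but it enters via a fixed point in $|E|$, not by forcing every effective divisor to be invariant.

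For (2), your plan to use the invariant top form $\omega$ to realize $-K_X = \sum a_i D_i$ is the right starting point, and $a_i \geq 1$ follows easily: the global section $\partial_1 \wedge \cdots \wedge \partial_3$ of $\mc{O}_X(-K_X)$ coming from the infinitesimal action vanishes along each $D_i$ since the orbit map drops rank there. The bound $a_i \geq 2$ is, as you say, the real content; but your proposed adjunction-on-orbit-closures argument is shaky, because adjunction for a curve in a threefold involves the normal bundle, not just $K_{\P^1}$, and the transversality you need (a $\G_a$-orbit closure meeting $D_i$ and no other component at the relevant endpoint) is not generally available. Hassett--Tschinkel's actual argument is local: at a general smooth point of $D_i$ one chooses coordinates adapted to the action so that the vector fields can be written explicitly, and a direct computation with the flow shows the vanishing order of $\partial_1 \wedge \cdots \wedge \partial_3$ along $D_i$ is at least $2$. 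Your $\P^1$-model heuristic (the invariant form $dz = -dw/w^2$ having a double pole at infinity) is morally this computation, but it has to be carried out in a transverse slice on $X$ rather than via global intersection numbers on orbit closures.
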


\begin{thm}[{\cite[Theorem 7.2.1]{Bri}}]\label{thm:bra}
Let $G$ be a connected algebraic group, $X$ a variety with $G$-action, $Y$ a variety and $f \colon X \to Y$ a proper morphism such that $f^{\sharp} \colon \mc{O}_{Y} \to f_{*}\mc{O}_{X}$ is an isomorphism.
Then there exists the unique $G$-action on $Y$ such that $f$ is equivariant. 
\end{thm}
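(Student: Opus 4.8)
The plan is to descend the action from $X$ to $Y$ along $f$, exploiting the universal property encoded in the hypothesis that $f^{\sharp}\colon \mc{O}_{Y}\to f_{*}\mc{O}_{X}$ is an isomorphism. Write $a\colon G\times X\to X$ for the given action; I seek a morphism $b\colon G\times Y\to Y$ satisfying
\[
b\circ(\id_{G}\times f)=f\circ a ,
\]
which is exactly the assertion that $f$ is equivariant, so that once $b$ is shown to be an action the theorem follows. First I would record two consequences of the hypothesis. Since $f$ is proper and $\mc{O}_{Y}\to f_{*}\mc{O}_{X}$ is an isomorphism, $f$ is surjective with connected fibres. Moreover $f$ is an \emph{epimorphism of schemes}: if $g_{1}\circ f=g_{2}\circ f$ then $g_{1}=g_{2}$, as one checks over an affine open $\Spec A$ of the target using that $\mc{O}_{Y}\hookrightarrow f_{*}\mc{O}_{X}$ is injective. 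This epimorphism property already gives the uniqueness clause and will be reused to verify the group axioms.

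Next I would propagate the hypothesis to products. As $G$ is flat over $\C$, the projection $\pr_{Y}\colon G\times Y\to Y$ is flat, and $\id_{G}\times f$ is the base change of $f$ along $\pr_{Y}$; hence $\id_{G}\times f$ is again proper, and by flat base change $(\id_{G}\times f)_{*}\mc{O}_{G\times X}\cong\pr_{Y}^{*}(f_{*}\mc{O}_{X})=\mc{O}_{G\times Y}$. Consequently $\id_{G}\times f$ shares the universal property of $f$: any morphism out of $G\times X$ that is constant on the set-theoretic fibres of $\id_{G}\times f$ factors uniquely through it. One builds the factorisation from the induced continuous map on points (a topological quotient, since $\id_{G}\times f$ is proper surjective) and upgrades it to a morphism locally over an affine open of the target, using $(\id_{G}\times f)_{*}\mc{O}_{G\times X}=\mc{O}_{G\times Y}$. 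The morphism I want to factor is $f\circ a\colon G\times X\to Y$, and its factorisation through $\id_{G}\times f$ will be the desired $b$.

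The crux is therefore to prove that $f\circ a$ is constant on the fibres of $\id_{G}\times f$, i.e. that for each fixed $g\in G$ the automorphism $x\mapsto g\cdot x$ carries every fibre $f^{-1}(y)$ into a single fibre of $f$. This is where connectedness of $G$ is indispensable; for a disconnected group it genuinely fails, as the swap of the two factors of $\P^{1}\times\P^{1}$ over $\P^{1}$ shows. I would deduce it from the rigidity lemma. Fix $y_{0}\in Y$ and set $F=(f^{-1}(y_{0}))_{\red}$, a proper connected variety, so $H^{0}(F,\mc{O}_{F})=\C$. Consider the restriction $h\colon F\times G\to Y$ of $f\circ a$, namely $h(x,g)=f(g\cdot x)$. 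At $g=e$ we have $h(x,e)=f(x)=y_{0}$ for all $x\in F$, so $h$ contracts $F\times\{e\}$ to a point; since $G$ is connected, the rigidity lemma forces $h$ to contract $F\times\{g\}$ for every $g$, i.e. $f(g\cdot x)$ is independent of $x\in F$. As $y_{0}$ was arbitrary, $f\circ a$ is constant on the fibres of $\id_{G}\times f$, and the universal property yields a unique $b$ with $b\circ(\id_{G}\times f)=f\circ a$.

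It remains to check that $b$ is a group action and to conclude uniqueness. Equivariance of $f$ is the defining equation of $b$. For the identity axiom, precomposing $y\mapsto b(e,y)$ and $\id_{Y}$ with $f$ yields the same morphism, since $b(e,f(x))=f(e\cdot x)=f(x)$; as $f$ is an epimorphism, the two agree. For associativity, the two morphisms $G\times G\times Y\to Y$ in question become equal after precomposition with $\id_{G\times G}\times f$, where the equality reduces to associativity of $a$ together with the defining equation of $b$; and $\id_{G\times G}\times f$ is again an epimorphism by the same flat base-change argument, so the two morphisms coincide. Finally, if $b'$ is any action making $f$ equivariant, then $b'\circ(\id_{G}\times f)=f\circ a=b\circ(\id_{G}\times f)$, whence $b'=b$ since $\id_{G}\times f$ is an epimorphism. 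The main obstacle is the fibre-contraction step of the third paragraph: everything else is formal once the rigidity lemma supplies it, and it is precisely there that the connectedness hypothesis on $G$ enters.
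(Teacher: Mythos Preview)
The paper does not supply its own proof of this statement: Theorem~\ref{thm:bra} is quoted verbatim from \cite[Theorem 7.2.1]{Bri} and used as a black box, so there is no in-paper argument to compare against. Your proposal is essentially the standard proof one finds in the literature (and in Brion's notes): use the hypothesis $f_{*}\mc{O}_{X}\cong\mc{O}_{Y}$ together with properness to see that $f$ is surjective with connected fibres and is a scheme-theoretic epimorphism, propagate this to $\id_{G}\times f$ by flat base change, invoke the rigidity lemma (with the connectedness of $G$) to show $f\circ a$ is constant on the fibres of $\id_{G}\times f$, and then descend. The argument is correct as written; the only place that merits a word of care is the passage from the set-theoretic factorisation to a morphism of schemes, but you handle this correctly via the identification $(\id_{G}\times f)_{*}\mc{O}_{G\times X}\cong\mc{O}_{G\times Y}$ and the fact that a proper surjection is a topological quotient map.
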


\begin{thm}[{\cite[Theorem 1.3]{Mar}}]\label{thm:p-p}
Let $p \colon P \to C$ be a $\P^{2}$-bundle and $L \subset P$ a $n$-dimensional linear subspace of a $p$-fiber ($n \leq 1$).
Let $\vp \colon \wt P = \Bl_{L}P \to P$ be the blow-up along $L$.
Then 
\begin{enumerate}
\item[\textup{(1)}] There exists a divisorial contraction $\psi \colon \wt P \to P'$ over $C$ such that the induced morphism $p' \colon P' \to C$ is a $\P^{2}$-bundle and $\psi$ is the blow-up along a $(1-n)$-dimensional linear subspace $L'$ of a $p'$-fiber.
\item[\textup{(2)}] The exceptional divisor $E_{\psi}$ is the strict transform of the $p$-fiber containing $L$. 
\end{enumerate}
\begin{equation}\label{diag:p-p}
\xymatrix@!C=15pt@R=15pt{
&\Bl_{L} P=\wt{P}=\Bl_{L'} P' \ar[dl]_-{\vp} \ar[dr]^-{\psi}
&
\\P \ar[d]_-{p}
&
&P' \ar[d]^-{p'}
\\ C \ar@{=}[rr]
& 
&C.
}
\end{equation}
\end{thm}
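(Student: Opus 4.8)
The plan is to run the relative two-ray game for $\wt{P}\to C$ and then invoke Mori's classification of divisorial contractions of smooth threefolds to identify the second contraction. Let $c\in C$ be the point with $L\subset F:=p^{-1}(c)\cong\P^{2}$. Since $L$ lies in the single fibre $F$, blowing it up raises the relative Picard number by one, so $\rho(\wt{P}/C)=2$ and $\ol{\NE}(\wt{P}/C)$ is a two-dimensional cone. One extremal ray $R_{\vp}$ is generated by the class of a curve contracted by $\vp$ (a line in $E_{\vp}\cong\P^{2}$ when $n=0$, a ruling of $E_{\vp}\cong\F_{1}$ when $n=1$), and contracting it recovers $\vp\colon\wt{P}\to P$. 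I would identify the other ray $R'$ by exhibiting a generating curve, namely the strict transform $g$ of a line in $F$ meeting $L$ (a ruling of $F_{\wt{P}}\cong\F_{1}$ when $n=0$, a line of $F_{\wt{P}}\cong\P^{2}$ when $n=1$): since $\vp_{*}[g]$ generates $\ol{\NE}(P/C)$ while $E_{\vp}\cdot g=1$ shows $[g]\notin R_{\vp}$, one gets $\ol{\NE}(\wt{P}/C)=R_{\vp}+R'$ with $R'=\R_{\geq 0}[g]$.

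The heart of the argument is the intersection computation on $R'$. Writing $F_{\wt{P}}=\vp^{*}F-E_{\vp}$ (the multiplicity $\mult_{L}F$ is one) and using that $N_{F/P}$ is trivial and $K_{P}\cdot(\tx{line in }F)=-3$, one checks $F_{\wt{P}}\cdot g=-1$ and $K_{\wt{P}}\cdot g<0$ in both cases. Thus $R'$ is a $K_{\wt{P}}$-negative extremal ray, hence contractible over $C$ by the relative contraction theorem, and its contraction $\psi\colon\wt{P}\to P'$ is divisorial with exceptional divisor exactly $F_{\wt{P}}$, which is (2). Moreover the sign computation records the type: for $n=0$, $\psi$ contracts $F_{\wt{P}}\cong\F_{1}$ onto a curve, while for $n=1$ it contracts $F_{\wt{P}}\cong\P^{2}$ (with normal bundle $\mc{O}(-1)$, since $F_{\wt{P}}\cdot g=-1$) to a point.

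By Mori's classification of extremal divisorial contractions on smooth threefolds, a contraction of the first type is the blow-up of a smooth curve $L'\subset P'$ in a smooth threefold, and one of the second type is the blow-up of a smooth point $L'$; in either case $P'$ is smooth, $\wt{P}=\Bl_{L'}P'$, and $\dim L'=1-n$. It remains to show that the induced $p'\colon P'\to C$ is a $\P^{2}$-bundle with $L'$ a linear subspace of $(p')^{-1}(c)$. Since $\psi$ is an isomorphism over $C\setminus\{c\}$, the morphism $p'$ agrees with $p$ there, so its general fibre is $\P^{2}$; as $P'$ is smooth and $C$ is a smooth curve, equidimensionality (the central fibre $\psi(\vp^{-1}(F))$ is a surface) gives flatness of $p'$, and the rigidity of $\P^{2}$ (namely $H^{1}(\P^{2},T_{\P^{2}})=0$) upgrades the family to a $\P^{2}$-bundle once the central fibre is known to be $\P^{2}$. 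Here $(p')^{-1}(c)=\psi(E_{\vp})$, and analysing $\psi|_{E_{\vp}}$ — an isomorphism $\P^{2}\xra{\sim}\P^{2}$ for $n=0$, the contraction of the $(-1)$-curve $E_{\vp}\cap F_{\wt{P}}$ realising $\F_{1}\to\P^{2}$ for $n=1$ — identifies this fibre with $\P^{2}$ and exhibits $L'=\psi(E_{\vp}\cap F_{\wt{P}})$ as a $(1-n)$-dimensional linear subspace.

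The main obstacle I anticipate is this last step: pinning down the central fibre of $p'$ and the linearity of $L'$ inside it, since showing $(p')^{-1}(c)\cong\P^{2}$ rather than some non-normal degeneration requires care. The cleanest route may be to bypass Mori's classification for this point by making the elementary transformation explicit: writing $P=\P(\mc{E})$ for a rank-three bundle $\mc{E}$ on $C$ (possible over a curve by Tsen's theorem), performing the Hecke modification of $\mc{E}$ at $c$ determined by the subspace of $\mc{E}\otimes k(c)$ corresponding to $L$, and setting $P'=\P(\mc{E}')$. This makes the $\P^{2}$-bundle structure of $p'$ and the linearity of $L'$ manifest, at the cost of a local coordinate computation matching $\Bl_{L}\P(\mc{E})$ with $\Bl_{L'}\P(\mc{E}')$.
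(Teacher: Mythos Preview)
The paper does not prove this statement: Theorem~\ref{thm:p-p} is quoted verbatim from \cite[Theorem~1.3]{Mar} as a preliminary, with no argument given. There is therefore no in-paper proof to compare your proposal against.

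That said, your approach is sound but anachronistic relative to the cited source. Maruyama's 1973 paper predates Mori's extremal contraction theory, so the original argument proceeds essentially along the lines of your final paragraph: one writes $P=\P(\mc{E})$ for a rank-three bundle on the curve (Tsen), performs the elementary transformation (Hecke modification) of $\mc{E}$ at $c$ along the quotient determined by $L$, and checks in local coordinates that the resulting $\P(\mc{E}')$ is exactly the target of the second contraction. This makes the $\P^{2}$-bundle structure of $P'$ and the linearity of $L'$ immediate, which is precisely the point you flag as the delicate step in the Mori-theoretic route. Your two-ray-game argument is correct as far as it goes (the intersection numbers you list are right), and Mori's classification does identify $\psi$ as a smooth blow-up; but to finish you would still need to verify that the central fibre of $p'$ is reduced and isomorphic to $\P^{2}$, for which the cleanest method is the sheaf computation you already sketch at the end.
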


\section{Proof of Theorem \ref{thm:maineq} $(1) \Rightarrow (2)$}
\label{sec:12}

In this section, we prove that Theorem \ref{thm:maineq} (1) implies (2).
For this, we make the following assumption in this section:

\begin{assu}
$X$ is a smooth projective $\G_{a}^{3}$-variety with the boundary divisor $D$. 
$f \colon X \to C$ is a del Pezzo fibration of degree $d$.

By Theorem \ref{thm:coeff}, $D$ consists of two irreducible components, say $D_{1} \cup D_{2}$.
\end{assu}

\begin{lem}\label{lem:base}
It holds that $C \cong \P^{1}$.
\end{lem}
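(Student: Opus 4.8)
The plan is to combine the $\G_{a}^{3}$-action with Brion's theorem (Theorem \ref{thm:bra}). By the very definition of an extremal contraction, $f \colon X \to C$ is proper and satisfies $f_{*}\mc{O}_{X} \cong \mc{O}_{C}$, so Theorem \ref{thm:bra} equips $C$ with a unique $\G_{a}^{3}$-action for which $f$ is equivariant. Observe also that $C$, being a normal projective variety of dimension one, is in fact a smooth projective curve.

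Next I would chase the dense open orbit through $f$. Let $U \cong \G_{a}^{3}$ be the dense open orbit of $X$. Since $f(g\cdot u) = g\cdot f(u)$, the image $f(U)$ of this single orbit is again a single $\G_{a}^{3}$-orbit $O$ of $C$, and it is dense in $C$ because $U$ is dense in $X$ and $f$ is surjective. As $O$ is dense in the curve $C$ it cannot reduce to a point, so $\dim O = 1$; being a one-dimensional homogeneous space of $\G_{a}^{3}$, it is isomorphic to a one-dimensional quotient group of $\G_{a}^{3}$, namely $\G_{a} \cong \A^{1}$.

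Hence $C$ is a smooth projective curve containing $\A^{1}$ as a dense open subset, and therefore $C \cong \P^{1}$. Alternatively and more directly: $X$ contains a copy of $\A^{3}$ as a dense open subset, so the curve $C$ is dominated by $\A^{3}$, hence unirational, hence rational since a unirational curve is rational by L\"uroth's theorem, giving $C \cong \P^{1}$ again.

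I do not expect any genuine obstacle here; the only two points deserving a remark are that a normal curve over $\C$ is automatically smooth and that every one-dimensional quotient of $\G_{a}^{3}$ is isomorphic to $\G_{a}$, both of which are standard.
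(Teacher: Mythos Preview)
Your argument is correct. In fact your ``alternative'' route is exactly what the paper does: the paper observes that $X$ is rational (it contains $\A^{3}$ as a dense open), and then uses the injection $H^{0}(C,\Omega_{C})\hookrightarrow H^{0}(X,\Omega_{X})=0$ to conclude that the smooth projective curve $C$ has genus zero. This is just the Hodge-theoretic incarnation of ``a curve dominated by a rational variety is rational'', i.e.\ your L\"uroth remark.

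Your primary approach via Theorem~\ref{thm:bra} is a genuinely different (and slightly richer) argument: you produce the induced $\G_{a}^{3}$-action on $C$ and then identify the image of the big orbit as a dense one-dimensional homogeneous space $\G_{a}^{3}/H\cong\G_{a}$, using that $\G_{a}^{3}$ is abelian and that its closed subgroups over $\C$ are vector subgroups. This yields more than mere rationality---it already equips $C$ with a $\G_{a}$-structure---and it anticipates the very next step in the paper (Proposition~\ref{prop:fiber}), where the same equivariance of $f$ is invoked together with the Borel fixed-point theorem. The paper's own proof of Lemma~\ref{lem:base} is shorter precisely because it postpones the appeal to Theorem~\ref{thm:bra} until it is genuinely needed.
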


\begin{proof}
$X$ is rational since it contains $\G_{a}^{3}$ as the dense open orbit.
Since $H^{0}(C, \Omega_{C}) \hookrightarrow H^{0}(X, \Omega_{X}) \cong 0$, we have $H^{0}(C, \Omega_{C}) \cong 0$ and the assertion holds.  
\end{proof}

\begin{prop}\label{prop:fiber}
The boundary divisor $D$ contains a $f$-fiber which is stable under $\G_{a}^{3}$-action.
\end{prop}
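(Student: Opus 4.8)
The strategy is to push the $\G_{a}^{3}$-action down to the base $C$ via Brion's theorem and then read off the orbit structure on $\P^{1}$. Since $X$ is projective, $f$ is proper, and since $f$ is a del Pezzo fibration (hence an extremal contraction) we have $f_{\ast}\mc{O}_{X} \cong \mc{O}_{C}$; thus Theorem \ref{thm:bra} yields a unique $\G_{a}^{3}$-action on $C$ for which $f$ is equivariant. By Lemma \ref{lem:base} we may record this as a homomorphism $\rho \colon \G_{a}^{3} \to \Aut(\P^{1}) = \mathrm{PGL}_{2}$. Let $U := X \setminus D$ be the open orbit. It is isomorphic to $\G_{a}^{3}$ acting on itself by translation, so choosing the base point $x_{0} \in U$ corresponding to the identity and setting $c_{0} := f(x_{0})$, equivariance of $f|_{U}$ gives $f(g \cdot x_{0}) = \rho(g)\cdot c_{0}$ for all $g \in \G_{a}^{3}$; hence $f(U)$ is precisely the $\rho$-orbit $\G_{a}^{3}\cdot c_{0} \subseteq \P^{1}$.

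Next I would show $f(U)$ is dense in $C$. If it were not, then $f^{-1}(c_{0}) \supseteq U$ would be a proper closed subset of the irreducible $3$-fold $X$, hence of dimension at most $2$, contradicting $\dim U = 3$. So $\rho$ is non-trivial, and its image is a non-trivial connected unipotent subgroup of $\mathrm{PGL}_{2}$; after a coordinate change on $\P^{1}$ it becomes the translation group $\{z \mapsto z + t\}_{t}$, which fixes a single point $\infty \in \P^{1}$ and acts transitively on $\P^{1}\setminus\{\infty\}$. Since $f(U)$ is a dense $\rho$-orbit, it must equal $\P^{1}\setminus\{\infty\}$.

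Finally, let $X_{\infty} := f^{-1}(\infty)$. From $f(U) = \P^{1}\setminus\{\infty\}$ we obtain $U \cap X_{\infty} = \emptyset$, i.e.\ $\Supp X_{\infty} \subseteq \Supp D$; and since $\infty$ is $\rho$-fixed and $f$ is equivariant, $g \cdot X_{\infty} = f^{-1}(\rho(g)\cdot \infty) = X_{\infty}$ for every $g$, so $X_{\infty}$ is $\G_{a}^{3}$-stable. As $\infty$ is a smooth point of the smooth curve $C$ and $X$ is smooth, $f^{\ast}\infty$ is a nonzero effective Cartier divisor with support $X_{\infty}$, so $X_{\infty}$ is an honest $f$-fiber contained in $D$ and stable under the action, which is what the proposition asks for. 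The only spot requiring a genuine argument is the density of $f(U)$, equivalently the exclusion of a trivial induced action on $C$; once that and the elementary description of unipotent subgroups of $\mathrm{PGL}_{2}$ are in hand the rest is formal, so I do not anticipate a real obstacle.
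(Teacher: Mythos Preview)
Your proof is correct and follows essentially the same route as the paper: push the action down to $C$ via Theorem~\ref{thm:bra}, locate a $\G_{a}^{3}$-fixed point on the base, and observe that its fiber is stable and contained in $D$. The only difference is that the paper gets the fixed point in one line from the Borel fixed-point theorem, whereas you argue by hand that a nontrivial unipotent image in $\mathrm{PGL}_{2}$ is a translation group fixing a single point; both arguments are fine, and your explicit verification that $f(U)=\P^{1}\setminus\{\infty\}$ makes transparent the step ``stable fiber $\Rightarrow$ contained in $D$'' that the paper leaves implicit.
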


\begin{proof}
By Theorem \ref{thm:bra}, there is the $\G_{a}^{3}$-action on $C$ such that $f$ is $\G_{a}^{3}$-equivariant. 
By the Borel fixed-point theorem \cite[\S 21.2]{Hum}, the action $\G_{a}^{3} \curvearrowright C$ has a fixed point, say $\infty \in C$.
Since the divisor $f^{*}(\infty)$ is stable under the $\G_{a}^{3}$-action, it is contained in $D$.
\end{proof}

Note that each $f$-fiber is irreducible by \cite[Theorem 3.5]{Mor}.
In the remainder of this section we require $D_{2}$ to be a $f$-fiber. 

\begin{prop} \label{prop:1}
It holds that $d \geq 8$.
\end{prop}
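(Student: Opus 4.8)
The plan is to argue by contradiction, combining the divisor-class relations coming from the $\G_a^3$-action (Theorem \ref{thm:coeff}) with the explicit numerical geometry of low-degree del Pezzo fibrations. So suppose $d \leq 7$. By Theorem \ref{thm:coeff}(1) we have $\Pic(X) = \Z D_1 \oplus \Z D_2$, and by Theorem \ref{thm:coeff}(2) there are integers $a_1, a_2 \geq 2$ with $-K_X \sim a_1 D_1 + a_2 D_2$. Since $D_2$ is an $f$-fiber, $D_2 \cdot \ell = 0$ for any curve $\ell$ contained in a fiber, while $D_1 \cdot \ell > 0$ for a general fiber curve because $D_1$ must restrict to a nonzero (hence $f$-ample, by relative Picard number one) divisor on the general del Pezzo fiber. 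Thus, writing $S$ for a general $f$-fiber and restricting, $-K_S \sim -K_X|_S \sim a_1 (D_1|_S)$ with $a_1 \geq 2$, so $-K_S$ is divisible by $a_1 \geq 2$ in $\Pic(S)$.

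The key step is then to analyze this divisibility on a smooth del Pezzo surface $S$ of degree $d = K_S^2$. The anticanonical class $-K_S$ is divisible by an integer $m \geq 2$ in $\Pic(S)$ only for very restricted $(S,m)$: for $\P^2$ one has $-K_{\P^2} = 3H$, giving $m \in \{1,3\}$ and $d = 9$; for $\P^1 \times \P^1$ one has $-K = 2(f_1 + f_2)$, giving $m = 2$ and $d = 8$; and on a del Pezzo surface of degree $\leq 7$ (a blow-up of $\P^2$ in $\geq 2$ points, or $\F_1$), the class $-K_S$ is primitive, so $m = 1$. Hence $a_1 \geq 2$ forces $d \geq 8$, contradicting $d \leq 7$. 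I would spell this out by passing to $\NS(S) = \Pic(S)$ and noting $-K_S$ pairs to $1$ with some class (e.g. the class of a line on the blow-up, or using that $-K_S$ generates together with exceptional classes), so no integer $\geq 2$ divides it; alternatively one can invoke that $(-K_S)^2 = d$ must be divisible by $a_1^2$, which already kills $d \in \{2,3,5,6,7\}$ and leaves only $d \in \{1,4\}$ to be excluded by the primitivity remark.

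I also need to justify the two subsidiary claims used above: that $D_1|_S \not\sim 0$, and that $D_1|_S$ is actually a genuine (integral) divisor class, not merely a $\Q$-class. The first follows because $-K_X|_S = -K_S \neq 0$ while $D_2|_S \sim 0$, so in $-K_X|_S = a_1 D_1|_S + a_2 D_2|_S = a_1 D_1|_S$ the class $D_1|_S$ cannot vanish. The second is automatic since restriction $\Pic(X) \to \Pic(S)$ sends Cartier divisors to Cartier divisors and $S$ is smooth.

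The main obstacle I anticipate is making the divisibility argument on del Pezzo surfaces clean and case-free; I expect the slickest route is the numerical one, using that a general $f$-fiber is a smooth del Pezzo surface $S$, that the composite $\Z D_1 \oplus \Z D_2 = \Pic(X) \to \Pic(S)$ kills $D_2$ and sends $-K_X$ to $-K_S$, and that on a del Pezzo surface of degree $d \leq 7$ the anticanonical class is not divisible by any integer $\geq 2$ (equivalently, it is part of a $\Z$-basis, or pairs to $1$ with an effective curve class such as a $(-1)$-curve) — which contradicts $-K_S \sim a_1 D_1|_S$ with $a_1 \geq 2$.
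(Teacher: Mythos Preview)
Your proposal is correct and follows essentially the same approach as the paper: both restrict attention to a general fiber $S$ (a smooth del Pezzo surface of degree $d\le 7$), use a $(-1)$-curve $l$ on $S$ with $(-K_X\cdot l)=(-K_S\cdot l)=1$, and deduce that $-K_X$ cannot be written as $a_1 D_1+a_2 D_2$ with $a_1\ge 2$. The only cosmetic difference is that the paper invokes \cite[Theorem~3.2]{Mor} to phrase the conclusion as $\Pic(X)=\Z(-K_X)\oplus\Z D_2$ (forcing $a_1=1$), whereas you phrase it as primitivity of $-K_S$ in $\Pic(S)$; the content is identical.
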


\begin{proof}
Conversely, suppose that $d \leq 7$.
By Theorem \ref{thm:coeff} (1), we have $\Pic(X)=\Z D_{1} \oplus \Z D_{2}$.
On the other hand, a general $f$-fiber is a smooth del Pezzo surface of degree $d \leq 7$, which has a $(-1)$-curve, say $l$. 
Combining $(-K_{X} \cdot l)=1$ and \cite[Theorem 3.2]{Mor}, we have $\Pic(X)=\Z(-K_{X}) \oplus \Z D_{2}$.
Hence we can write $-K_{X} \sim a_{1} D_{1}+a_{2} D_{2}$ with $a_{1}=1$ and $a_{2} \in \Z$, a contradiction with Theorem \ref{thm:coeff} (2).
\end{proof}


\begin{prop}\label{prop:2}
It holds that $d \neq 8$.
\end{prop}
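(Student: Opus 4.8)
The plan is to argue by contradiction: assume $d = 8$ and derive a contradiction with Theorem \ref{thm:coeff}. By Proposition \ref{prop:1} we already know $d \geq 8$, so this is the remaining case. A general $f$-fiber is then a del Pezzo surface of degree $8$, which is either $\P^1 \times \P^1$ or $\F_1 = \Bl_{\mathrm{pt}}\P^2$. In either case the fiber carries extra extremal rays, and the strategy is to run a relative MMP / perform an elementary link over $C \cong \P^1$ (Lemma \ref{lem:base}) to reduce to a $\P^2$-bundle or to a degree-$9$ situation, keeping track of the $\G_a^3$-equivariance via Theorem \ref{thm:bra}. Concretely, I would invoke the classification of del Pezzo fibrations of degree $8$ and the elementary links machinery of \cite{Nag2} (cited in the paper precisely for this purpose): such an $X$ receives a birational contraction or sits in a Sarkisov link relating it to a $\P^2$-bundle $P \to \P^1$, and the contracted/extracted locus is a section-like subvariety contained in (the strict transform of) the boundary $D$.

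The key steps, in order, are: (i) reduce to $C \cong \P^1$ and fix the $\G_a^3$-equivariant structure downstairs, with $D_2 = f^*(\infty)$ a fiber and $D_1$ the horizontal component, using Propositions \ref{prop:fiber} and \ref{prop:1}; (ii) analyze the restriction of the $\G_a^3$-action, or rather of a suitable $\G_a$-subgroup, to a general fiber $S$, a degree-$8$ del Pezzo surface — since $\G_a^3$ acts on $X$ with dense orbit and $f$ is equivariant, the $\G_a^3$-action restricts on fibers over the open orbit of $C$ (a single orbit, as $\G_a \act \A^1$) to actions with a $2$-dimensional dense orbit on $S$, forcing $S$ to be an equivariant compactification of $\G_a^2$; (iii) use that $\F_1$ and $\P^1 \times \P^1$ each admit only restricted $\G_a^2$-structures, and in particular that the $(-1)$-curve on $\F_1$ (or one ruling on $\P^1 \times \P^1$) must lie in the boundary, hence the corresponding horizontal divisor class is forced; (iv) perform the elementary link of Theorem \ref{thm:p-p} or its $\F_1$-analogue along the relevant section inside $D_1$ to obtain $X' \to \P^1$, which is either a $\P^2$-bundle or again degree $8$, and check via Theorem \ref{thm:coeff}(1)–(3) applied to $X'$ (using \cite{Nag2} to transport the boundary) that the numerical constraints — $\Pic$ freely generated by the two boundary components, $-K_X = a_1 D_1 + a_2 D_2$ with $a_i \geq 2$ — cannot all hold; (v) conclude.

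The main obstacle I anticipate is step (iv): making precise which elementary link to perform and verifying that the boundary divisor and the $\G_a^3$-structure are genuinely preserved (or controllably transformed) under it, since Theorem \ref{thm:p-p} is stated only for linear subspaces of $\P^2$-fibers and the degree-$8$ case requires the analogous statement for conic bundles / $\F_1$-bundles, which is exactly where the input from \cite{Nag2} becomes essential. A secondary subtlety is pinning down the second coefficient $a_2$ attached to the fiber class $D_2$: one expects $D_2 \sim f^*(\mathrm{pt})$ to appear in $-K_X$ with a coefficient determined by $\deg f$ and the canonical class of $\P^1$, and the contradiction should come from this coefficient being forced to be $1$ (violating $a_i \geq 2$) once $\Pic(X) = \Z D_1 \oplus \Z D_2$ is combined with the adjunction relation on a general fiber, mirroring the mechanism already used in the proof of Proposition \ref{prop:1}. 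Closing this gap for $d = 8$ rather than $d \leq 7$ is precisely what makes this a separate proposition.
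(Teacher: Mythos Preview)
Your outline has the right scaffolding (find a $\G_a^3$-stable section, run an elementary link over $C$, invoke \cite{Nag2}), but the endgame you propose does not close. The expectation in (iv)--(v) that ``the contradiction should come from this coefficient being forced to be $1$'' is precisely what fails in degree $8$ and is the reason this case is a separate proposition: restricting $-K_X \sim a_1 D_1 + a_2 D_2$ to a smooth fiber $F \cong \F_0$ gives $a_1 D_1|_F$ of bidegree $(2,2)$, so $a_1 = 2$, which is compatible with Theorem~\ref{thm:coeff}(2). Nothing forces $a_2 = 1$ either. Moreover, after the elementary link you land on a $\P^2$-bundle $P \to \P^1$, and applying Theorem~\ref{thm:coeff} to $P$ cannot by itself yield a contradiction, since the whole point of the paper is that $\P^2$-bundles \emph{do} carry $\G_a^3$-structures.

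The missing idea is the nature of the link from degree $8$ to degree $9$: blowing up a $\G_a^3$-stable section $s \subset X$ and contracting produces a $\P^2$-bundle $P$, but the contracted divisor maps onto a smooth connected \emph{bisection} $B \subset P$, not a section (this is \cite[(2.7.3)]{D'S}). Then $p|_B \colon B \to C$ is a $\G_a^3$-equivariant double cover between copies of $\P^1$, hence has two ramification points, both $\G_a^3$-fixed; but the induced $\G_a^1$-action on $B \cong \P^1$ has a unique fixed point. That is the contradiction. Your plan never isolates $B$ or its $2:1$ map, so it cannot reach this point. Separately, your step (ii)--(iii) about $\G_a^2$-structures on fibers is not how the paper finds the stable section: instead the paper uses the normality/type invariant of $D_1$ from \cite{Nag2} to either locate $s = \Sing D_1$ directly, or to perform equivariant links lowering the type until $D_1 \cong \F_n$, whose negative section (or, when $n=0$, a fiber of the other ruling picked out by Borel) furnishes the stable section. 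Finally, for a del Pezzo fibration of degree $8$ with $\rho(X/C)=1$ the general smooth fiber is $\F_0$, not $\F_1$; this is implicit in the paper's use of ``quadric fibrations'' via \cite{Nag2}.
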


\begin{proof}
Conversely, suppose that $d = 8$.

\noindent \underline{Step $1$}:
First we show that we get a contradiction if there is a $\G_{a}^{3}$-stable $f$-section, say $s$.
In this case, we can take the elementary link with center along $s$ by \cite[(2.7.3)]{D'S}:
\begin{equation}\label{diag:elem2}
\xymatrix@!C=15pt@R=15pt{
&\wt{X} \ar[dl]_-{\vp} \ar[dr]^-{\psi}
&
\\X \ar[d]_-{f}
&
&P \ar[d]^-{p}
\\C \ar@{=}[rr]
& 
&C
}
\end{equation}
where $\vp$ is the blow-up along $s$, $p$ is a $\P^{2}$-bundle and $\psi$ is the blow-up along a smooth connected $p$-bisection, say $B$.

Since $s$ is $\G_{a}^{3}$-stable, $\wt X$ admits the unique $\G_{a}^{3}$-action such that $\vp$ is equivariant. 
By Theorem \ref{thm:bra}, $P$ and $C$ also admit the unique $\G_{a}^{3}$-actions such that $\psi$ and $p$ are equivariant respectively.
Since $E_{\psi}$ is $\G_{a}^{3}$-stable, so is $B$.
Hence $p|_{B} \colon B \to C$ is a $\G_{a}^{3}$-equivariant double covering.
Since $X$ has the dense open orbit, so does $C$. 
Since $p|_{B}$ is surjective, finite and $\G_{a}^{3}$-equivariant, $B$ also has the dense open orbit.
Since $C$ and $B$ have dominant maps from $\G_{a}^{3}$, we obtain $C \cong B \cong \P^{1}$.

Let us show that $B$ has the unique $\G_{a}^{3}$-fixed point.
By \cite[Proposition 3.6]{H-M}, $\G_{a}^{3}$ contains a subgroup $G \cong \G_{a}^{2}$ such that the $\G_{a}^{3}$-action on $B$ factorizes via $\G_{a}^{3}/G \cong \G_{a}^{1}$.
Since $\G_{a}^{1}$ has no non-trivial algebraic subgroup, the stabilizer of a general point of this $\G_{a}^{1}$-action is trivial.
Hence this action is a $\G_{a}^{1}$-structure of $B$.
By \cite[Proposition 3.1]{H-T99}, $B$ has the unique fixed point.
By the same argument, $C$ also has the unique $\G_{a}^{3}$-fixed point.

Let $b \in B$ and $c \in C$ are the $\G_{a}^{3}$-fixed points.
Since $p|_{B}$ is equivariant, we have $p(b)=c$.
If $p|_{B}$ is unramified at $b$, then the point in $(p|_{B})^{-1}(c) \setminus \{b\}$ is also fixed, a contradiction.
Hence $p|_{B}$ is ramified at $b$. 
Since $C \cong B \cong \P^{1}$, $p|_{B}$ has the other ramification point, which is also fixed, a contradiction.

\noindent \underline{Step 2}:
Now it suffices to find a $\G_{a}^{3}$-stable $f$-section.
By Theorem \ref{thm:coeff} (2), there are integers $a_{1}, a_{2} \geq 2$ such that $-K_{X} \sim a_{1}D_{1}+a_{2} D_{2}$.
For a smooth $f$-fiber $F \cong \F_{0}$, the restriction $-K_{X}|_{F} \sim a_{1}D_{1}|_{F}$ is a divisor of bidegree $(2,2)$.
Hence $a_{1}=2$.
On the other hand, by the choice of $D_{2}$, $(X, D_{1}, D_{2})$ is a compactification of $\A^{3}$ compatible with $f$ (See \cite[Definition 1.1]{Nag2}).

If $D_{1}$ is non-normal, then $s \coloneqq \Sing D_{1}$ forms a section by \cite[Lemma 2.7]{Nag2}. Since $D_{1}$ is $\G_{a}^{3}$-stable, so is $s$.
Therefore we derive a contradiction as in Step 1.

Hence $D_{1}$ is normal.
By \cite[Theorem 4.2]{Nag2}, $D_{2}$ is isomorphic to the quadric cone.
Recall that in \cite[Definition 7.2]{Nag2}, we assign a non-negative integer to $(X, D_{1}, D_{2})$, which we call the type of the triplet, by using the singularity of $D_{1}$.
By definition, $(X, D_{1}, D_{2})$ is of type $0$ if and only if $D_{1}$ is a Hirzebruch surface.

Suppose that $(X, D_{1}, D_{2})$ is of type $m>0$.
Then $\Supp (D_{1}|_{D_{2}})$ contains a ruling of the quadric cone $D_{2}$ by \cite[Theorem 7.1]{Nag2}, say $l$.
Then we can take the elementary link with center along $l$ by \cite[Lemma 2.6]{Nag2}:
\begin{equation}\label{diag:elem2}
\xymatrix@!C=15pt@R=15pt{
&\wt{X} \ar[dl]_-{\vp} \ar[dr]^-{\psi}
&
\\X \ar[d]_-{f}
&
&X' \ar[d]^-{f'}
\\C \ar@{=}[rr]
& 
&C
}
\end{equation}
where $\vp$ is the blow-up along $l$, $f'$ is a del Pezzo fibration of degree eight and $\psi$ is the blow-up along a ruling in a singular $f'$-fiber such that $E_{\psi}=(D_{2})_{\wt X}$.

Since $\Supp (D_{1}|_{D_{2}})$ is $\G_{a}^{3}$-stable and $\G_{a}^{3}$ is irreducible, $l$ is also $\G_{a}^{3}$-stable.
Hence $\wt X$ admits a $\G_{a}^{3}$-structure with the boundary divisor $(D_{1} \cup D_{2})_{\wt X} \cup E_{\vp}$.
Theorem \ref{thm:bra} now gives $X'$ a $\G_{a}^{3}$-structure with the boundary divisor $(D_{1})_{X'} \cup (E_{\vp})_{X'}$.
By \cite[Theorem 7.5]{Nag2}, $(X', (D_{1})_{X'}, (E_{\vp})_{X'})$ is of type $m-1$.

By repeated application of the above construction, we only have to exclude the case when $(X, D_{1}, D_{2})$ is of type $0$.
Then $D_{1}$ is $\G_{a}^{3}$-stable and is isomorphic to $\F_{n}$ for some $n$.
If $n >0$, then the negative section $s$ in $D_{1}$ is a $\G_{a}^{3}$-stable $f$-section, and we derive a contradiction as in Step 1.
Hence $n=0$.
There is the $\P^{1}$-bundle structure $h \colon D_{1} \to \P^{1}$ other than $f|_{D_{1}}$.
Combining Theorem \ref{thm:bra} and the Borel fixed-point theorem, we get a $\G_{a}^{3}$-stable $h$-fiber $s$, which is a $f$-section.
Therefore we derive a contradiction as in Step 1.
\end{proof}


\begin{proof}[Proof of Theorem \ref{thm:maineq} $(1) \Rightarrow (2)$]
Suppose that (1) holds,
Combining Propositions \ref{prop:1} and \ref{prop:2}, we get $d=9$.
By Theorem \ref{thm:coeff} (2), there are integers $a_{1}, a_{2} \geq 2$ such that $-K_{X} \sim a_{1}D_{1}+a_{2}D_{2}$.
By the adjunction formula, we have $a_{1}D_{1}|_{D_{2}} \sim -K_{X}|_{D_{2}} \sim -K_{D_{2}} \sim \mc{O}_{\P^{2}}(3)$. 
Hence $a_{1}=3$ and $D_{1}$ is a sub $\P^{1}$-bundle.
The second assertion of (2) follows from Theorem \ref{thm:coeff} (3).
\end{proof}

\section{Proof of Theorem \ref{thm:maineq} $(2) \Rightarrow (1)$}
\label{sec:21}

In this section, we prove that Theorem \ref{thm:maineq} (2) implies (1).

\begin{nota}\label{nota:2}
For this, we make the following notation in this section:
\begin{itemize}
\item $\P_{X}(\mc{E}) \coloneqq \PProj_{\mc{O}_{X}} \bigoplus_{m \geq 0} \Sym^{m}(\mc{E})$: the projectivization of a locally free sheaf $\mc{E}$ on a variety $X$. 
\item $\F(e_{1},e_{2},e_{3}) \coloneqq \P_{\P^{1}}(\bigoplus_{i=1}^{3}\mc{O}_{\P^{1}}(e_{i}))$.
\item $p_{d_{1}, d_{2}}$: the $\P^{2}$-bundle structure of $\F(-d_{1}, -d_{2}, 0)$.
\item $\xi_{d_{1}, d_{2}}$: a tautological divisor of $\F(-d_{1}, -d_{2}, 0)$.
\end{itemize}
\end{nota}


To complete the proof of Theorem \ref{thm:maineq}, we prepare the following five lemmas.

\begin{lem}\label{lem:unique}
Let $P \coloneqq \F(-d_{1}, -d_{2}, 0)$ with $d_{1} \geq d_{2} \geq 0$, $E$ a sub $\P^{1}$-bundle of $P$ and $F$ a $p_{d_{1}, d_{2}}$-fiber.
Then $E$ and $F$ generate $\Lambda_{\mathrm{eff}}(P)$ if and only if $E \sim \xi_{d_{1}, d_{2}}$.
Moreover, in this case, the pair $(E, F)$ is unique up to $\Aut(X)$.
\end{lem}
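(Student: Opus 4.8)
The plan is to work with the concrete model $P = \P_{\P^1}(\mc E)$ for $\mc E \coloneqq \mc O_{\P^1}(-d_1) \oplus \mc O_{\P^1}(-d_2) \oplus \mc O_{\P^1}$, with $p = p_{d_1,d_2}$ and $\xi = \xi_{d_1,d_2}$ normalized so that $p_*\mc O_P(\xi) \cong \mc E$; then $\Pic(P) = \Z\xi \oplus \Z F$, where $F$ is the class of a $p$-fiber, and I read $\Aut(X)$ in the statement as $\Aut(P)$. First I would determine $\Lambda_{\mathrm{eff}}(P)$. As $\xi$ and $F$ are effective it is enough to rule out every other ray: for $m \le -1$ one has $p_*\mc O_P(m\xi + bF) = 0$, so $h^0(m\xi+bF) = 0$; for $m = 0$ one has $h^0 = h^0(\P^1,\mc O(b))$; and for $m \ge 1$ one has $h^0 = h^0(\P^1, \Sym^m\mc E \otimes \mc O(b))$, where $\Sym^m\mc E$ is a direct sum of line bundles of degrees lying in $[-md_1, 0]$. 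In each case effectivity forces $m \ge 0$ and $b \ge 0$, so $\Lambda_{\mathrm{eff}}(P) = \R_{\ge 0}\xi + \R_{\ge 0}F$, with extremal rays $\R_{\ge 0}\xi$ and $\R_{\ge 0}F$.

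Next I would compute the class of an arbitrary sub-$\P^1$-bundle $E \subset P$. Writing $E = \P_{\P^1}(\mc Q)$ for a rank-$2$ locally free quotient $\mc E \twoheadrightarrow \mc Q$ with kernel a line sub-bundle $\mc L \hookrightarrow \mc E$, composing $p^*\mc L \hookrightarrow p^*\mc E \twoheadrightarrow \mc O_P(\xi)$ realizes $E$ as the zero locus of a section of $\mc O_P(\xi) \otimes p^*\mc L^{-1}$, so $E \sim \xi - (\deg\mc L)F$. Since $\mc E$ is a direct sum of line bundles of non-positive degree, every line sub-bundle of $\mc E$ has degree $\le 0$, hence $E \sim \xi + aF$ with $a \coloneqq -\deg\mc L \ge 0$. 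Because $F$ is already one of the two extremal generators of $\Lambda_{\mathrm{eff}}(P)$, the pair $E, F$ generates $\Lambda_{\mathrm{eff}}(P)$ exactly when $[E]$ lies on the other extremal ray $\R_{\ge 0}\xi$, i.e.\ when $a = 0$, i.e.\ when $E \sim \xi$; this establishes the equivalence.

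For the uniqueness claim, suppose $E \sim \xi$. Then $\deg\mc L = 0$, so $\mc L \cong \mc O_{\P^1}$ and $E$ is the zero locus of a nowhere-vanishing section of $\mc O_P(\xi)$, equivalently of a sub-bundle inclusion $\mc O_{\P^1} \hookrightarrow \mc E$; conversely every nonzero $s \in H^0(P,\mc O_P(\xi)) = H^0(\P^1,\mc E)$ is such an inclusion, since its nonzero components are constants, so the set of sub-$\P^1$-bundles $E \sim \xi$ is exactly $\P(H^0(\P^1,\mc E))$. Now two facts finish the argument. (i) $\Aut(P)$ is transitive on $p$-fibers: every $\phi \in \Aut(\P^1) = \mathrm{PGL}_2$ satisfies $\phi^*\mc E \cong \mc E$ (line bundles on $\P^1$ are $\phi$-invariant), hence lifts to $\tilde\phi \in \Aut(P)$, and $\mathrm{PGL}_2$ is transitive on $\P^1$. (ii) The subgroup $\Aut(P/\P^1)$ of automorphisms over $\P^1$, a quotient of $\Aut_{\mc O_{\P^1}}(\mc E)$, is transitive on $\P(H^0(\P^1,\mc E))$: since $H^0(\mc O(-d_i)) = 0$ whenever $d_i > 0$, the space $H^0(\P^1,\mc E)$ is the span of the degree-$0$ summands of $\mc E$, and the general linear group of those summands sits inside $\Aut(\mc E)$ and acts transitively on $H^0(\P^1,\mc E) \setminus \{0\}$. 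Given two admissible pairs $(E_1,F_1)$, $(E_2,F_2)$, first move $F_1$ onto $F_2$ by a lift $\tilde\phi$ as in (i); then $\tilde\phi(E_1)$ is again a sub-$\P^1$-bundle, and $\bigl(\tilde\phi(E_1),F_2\bigr)$ generates $\Lambda_{\mathrm{eff}}(P)$ because $\tilde\phi^*$ is a lattice automorphism preserving $\Lambda_{\mathrm{eff}}(P)$, so $\tilde\phi(E_1) \sim \xi$ by the equivalence just proved; finally move $\tilde\phi(E_1)$ onto $E_2$ by an automorphism over $\P^1$ as in (ii), which fixes $F_2$.

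The computation of $\Lambda_{\mathrm{eff}}(P)$ and of the class of a sub-$\P^1$-bundle is routine cohomology and Picard-lattice bookkeeping for direct sums of line bundles on $\P^1$; the point I expect to require the most care is the uniqueness statement in the degenerate case $d_2 = 0$, where $|\xi|$ is positive-dimensional, so one genuinely needs the $\P^1$-automorphisms of $P$ to move one sub-$\P^1$-bundle in $|\xi|$ to another while keeping a chosen fiber fixed — the reduction to the $\mathrm{GL}$-action on the degree-$0$ summands of $\mc E$ is what makes this transparent.
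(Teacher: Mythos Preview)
Your proof is correct and follows a more intrinsic route than the paper's. The paper works in the Cox-ring presentation of $P$ as a GIT quotient of $(\A^{2}\setminus\{0\})\times(\A^{3}\setminus\{0\})$ by a $(\G_{m})^{2}$-action: the effective cone is determined by listing the bihomogeneous monomials spanning $H^{0}(a\xi_{d_{1},d_{2}}+bF)$, and uniqueness is proved by writing $E$ as $\{\sum_{i} u_{i}x_{i}=0\}$ and $F$ as $\{t_{1}+vt_{2}=0\}$ and then normalising via explicit coordinate changes (swapping $x_{i}\leftrightarrow x_{3}$, substituting $x_{3}\mapsto x_{3}+c_{1}x_{1}+c_{2}x_{2}$ and $t_{1}\mapsto t_{1}+vt_{2}$) that descend from $\Aut((\A^{2}\setminus\{0\})\times(\A^{3}\setminus\{0\}))$ to $\Aut(P)$. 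By contrast, you compute the effective cone via $p_{*}\mc{O}_{P}(m\xi+bF)=\Sym^{m}\mc{E}\otimes\mc{O}(b)$, identify a sub-$\P^{1}$-bundle with a line sub-bundle $\mc{L}\hookrightarrow\mc{E}$ to read off its class as $\xi-(\deg\mc{L})F$, and establish uniqueness by lifting $\mathrm{PGL}_{2}$ to $\Aut(P)$ to move the fiber and then letting the general linear group of the degree-$0$ summands of $\mc{E}$, sitting inside $\Aut_{\mc{O}_{\P^{1}}}(\mc{E})$, act transitively on $\P(H^{0}(\P^{1},\mc{E}))$. Your argument is coordinate-free and makes the structural reason for uniqueness transparent (namely that $H^{0}(\mc{E})$ coincides with the degree-$0$ part of $\mc{E}$ and is therefore a single $\mathrm{GL}$-orbit off zero); the paper's argument has the complementary virtue of exhibiting the required automorphism explicitly, which dovetails with the concrete elementary-link computations carried out later in the section.
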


\begin{proof}
Recall from \cite[Chapter 2]{Rei} that $P=\F(-d_{1}, -d_{2}, 0)$ is defined as the quotient of $(\A^{2} \setminus \{0\}) \times (\A^{3} \setminus \{0\})$ by the following $(\G_{m})^{2}$-action:
\begin{eqnarray*}
(\G_{m})^{2} \times (\A^{2} \setminus \{0\}) \times (\A^{3} \setminus \{0\}) &\to& (\A^{2} \setminus \{0\}) \times (\A^{3} \setminus \{0\})\\
((\lambda, \mu), (t_{1}, t_{2} ; x_{1}, x_{2}, x_{3})) &\mapsto& (\lambda t_{0}, \lambda t_{1} ; \lambda^{d_{1}}\mu x_{1}, \lambda^{d_{2}}\mu x_{2}, \mu x_{3}).
\end{eqnarray*}
We also have $\Pic P =\Z \xi_{d_{1}, d_{2}} \oplus \Z F$, and for each $a, b \in \Z$, the linear system $|a\xi_{d_{1}, d_{2}}+bF|$ is parametrized by the vector space of polynomials spanned by monomials $t_{1}^{b_{1}}t_{2}^{b_{2}}x_{1}^{a_{1}}x_{2}^{a_{2}}x_{3}^{a_{3}} \in \C[t_{1}, t_{2}, x_{1}, x_{2}, x_{3}]$ with $a_{1}+a_{2}+a_{3}=a$ and $b_{1}+b_{2}=-d_{1}a_{1}-d_{2}a_{2}+b$.
Hence $|a\xi_{d_{1}, d_{2}}+bF| \neq \emptyset$ if and only if $a \geq 0$ and $b \geq 0$, and the first assertion follows.

Now suppose that $E \sim \xi_{d_{1}, d_{2}}$.
Then $E$ is defined by $\sum_{i=1}^{3}u_{i}x_{i}$ for some $u_{i} \in \C$ for $i=1,2,3$ such that $u_{i}=0$ unless $d_{i}=0$ for $i=1,2$. 
Suppose that $u_{3}=0$. Then $u_{i} \neq 0$ for some $i=1,2$.
Take $\wt h \in \Aut((\A^{2} \setminus \{0\}) \times (\A^{3} \setminus \{0\}))$ which interchanges $x_{i}$ and $x_{3}$, which is $(\G_{m})^{2}$-equivariant.
Since $P$ is the geometric quotient by \cite[Proposition 1.9]{GIT}, it descends to an element in $\Aut(P)$.
Hence we may assume that $u_{3}=1$.
By a similar argument, we also may assume that $F$ is defined by $t_{1}+vt_{2}$ for some $v \in \C$.

Now let $E'$ and $F'$ be divisors on $P$ defined by $x_{3}$ and $t_{1}$ respectively.
Take $\wt h \in \Aut((\A^{2} \setminus \{0\}) \times (\A^{3} \setminus \{0\}))$ such that 
\begin{eqnarray}
&&\wt h^{*}(x_{1})=x_{1}, \wt h^{*}(x_{2})=x_{2}, \wt h^{*}(x_{3})=c_{1}x_{1}+c_{2}x_{2}+x_{3}, \\
&&\wt h^{*}(t_{1})=t_{1}+vt_{2}, \wt h^{*}(t_{2})=t_{2}.
\end{eqnarray}
Since $\wt h$ is $(\G_{m})^{2}$-equivariant, it descends to $h \in \Aut(P)$ such that $h(E)=E'$ and $h(F)=F'$, which complete the proof.
\end{proof}

\begin{lem}\label{lem:lineblowup}
We follow the situation of Theorem \ref{thm:p-p}.
Suppose that $P = \F(-d, -d, 0)$ with $d \geq 0$ and $n=1$.
If there exists $H \in |\xi_{d,d}|$ containing $L$,
then $P' \cong \F(-d-1, -d-1, 0)$ and $H_{P'} \sim \xi_{d+1, d+1}$.
\end{lem}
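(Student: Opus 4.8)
The plan is to work in the explicit homogeneous coordinate presentation of $P = \F(-d,-d,0)$ introduced in the proof of Lemma \ref{lem:unique}, namely the geometric quotient of $(\A^2\setminus\{0\})\times(\A^3\setminus\{0\})$ by the $(\G_m)^2$-action with weights $(1,1)$ on $t_1,t_2$ and $(d,1),(d,1),(0,1)$ on $x_1,x_2,x_3$. Since $L$ is a line in a $p_{d,d}$-fiber and $H \in |\xi_{d,d}|$ contains $L$, I would first normalize coordinates: by the $\Aut(P)$-argument already used in Lemma \ref{lem:unique}, I may assume $H = \{x_3 = 0\}$ and that $L$ lies in the fiber over a fixed point of $\P^1$, say $\{t_2 = 0\}$; since $L \subset H$ is a line in the $\P^2$-fiber $\{t_2 = 0\}$ cut out by $x_3 = 0$, I may further arrange $L = \{t_2 = 0,\ x_3 = 0\}$ (a line in $\P^2 = \Proj\C[x_1,x_2,x_3]$).

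Next I would identify $\wt P = \Bl_L P$ and the second contraction $\psi$ concretely. Blowing up the codimension-two subvariety $L = \{t_2 = x_3 = 0\}$, the strict transform of $H = \{x_3=0\}$ and the strict transform of the fiber $\{t_2=0\}$ become disjoint, and $\psi$ contracts the latter (this is exactly Theorem \ref{thm:p-p}(2): $E_\psi$ is the strict transform of the $p$-fiber through $L$). The clean way to package this is to exhibit $P'$ directly as another quotient: introduce the new coordinate ratio that makes $\psi$ a morphism. Concretely, on the chart where the blow-up replaces $(t_2, x_3)$ by a point of $\P^1$, the variety $P'$ is the quotient of $(\A^2\setminus\{0\})\times(\A^3\setminus\{0\})$ by a $(\G_m)^2$-action in which $t_2$ has acquired weight $(1,-1)$ relative to $x_3$ — that is, after renaming, $P' = \F(-d-1,-d-1,0)$ with homogeneous coordinates $t_1', t_2', x_1', x_2', x_3'$ of weights $(1,1)$ and $(d+1,1),(d+1,1),(0,1)$, where $t_2'$ corresponds to $t_2 x_3$ (up to the weight bookkeeping) and $x_3'$ to $x_3$. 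I would verify this matches the numerical invariant: $\psi$ blows up a line $L'$ in a $p'$-fiber (Theorem \ref{thm:p-p}(1) with $n' = 1 - n = 0$... but here $n=1$, so actually $L'$ is a point, $0$-dimensional), and $E_\psi \cdot (\text{fiber})$, together with $-K$ computations, pins down that the twisting jumps from $d$ to $d+1$ on both summands by symmetry (the two $x_i$ with $i=1,2$ play symmetric roles throughout, so $P'$ must again be of the form $\F(-d',-d',0)$).

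Finally, to compute $H_{P'}$: the strict transform of $H = \{x_3=0\}$ is not touched by $\psi$ near its generic point, and in the $P'$-coordinates it is cut out by $x_3' = 0$, hence $H_{P'} \in |\xi_{d+1,d+1}|$ by the description of linear systems in Lemma \ref{lem:unique} (the class of $\{x_3'=0\}$ is the tautological class $\xi_{d+1,d+1}$). Alternatively — and this is the verification I would actually write out — compute in $\Pic$: we have $\Pic \wt P = \vp^*\Pic P \oplus \Z E_\vp = \psi^*\Pic P' \oplus \Z E_\psi$, and one knows $E_\psi = \vp^*(\text{fiber class}) - E_\vp$ from Theorem \ref{thm:p-p}(2) while $\psi^*\xi_{d+1,d+1} = \vp^*\xi_{d,d} - E_\vp$ (since $H$ contains $L$ with multiplicity one, its total transform drops by $E_\vp$, and its pushforward to $P'$ is $\psi$-relatively the tautological class); chasing these identities gives $H_{\wt P} = \vp^*\xi_{d,d} - E_\vp = \psi^*\xi_{d+1,d+1}$, so $H_{P'} = \psi_* H_{\wt P} \sim \xi_{d+1,d+1}$.

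I expect the main obstacle to be the bookkeeping in the second paragraph — rigorously justifying that $\psi$ realizes $P'$ as $\F(-d-1,-d-1,0)$ rather than, a priori, some $\F(-d-1-k,-d-1+k,0)$. The symmetry between the two coordinates $x_1,x_2$ (they have equal weight and are not involved in $L$ or $H$) forces the decomposable bundle to stay balanced, but writing this transparently requires either the explicit toric/quotient computation sketched above or a careful intersection-theory argument on $\wt P$; everything else (normalizing coordinates, the final $\Pic$ chase) is routine given Lemma \ref{lem:unique} and Theorem \ref{thm:p-p}.
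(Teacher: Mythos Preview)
Your approach is genuinely different from the paper's, and while viable in outline, the key identification step is not actually carried out. The paper avoids all coordinate computations: it sets $\mc{E} \coloneqq p'_{*}\mc{O}_{P'}(H_{P'})$ and determines $\mc{E}$ via two short exact sequences. First, pushing forward $0 \to \mc{O}_{\wt P}(\vp^{*}H - E_{\vp}) \to \mc{O}_{\wt P}(\vp^{*}H) \to \mc{O}_{E_{\vp}}(\vp^{*}H|_{E_{\vp}}) \to 0$ along $p\circ\vp$ (using $\vp^{*}H - E_{\vp} \sim \psi^{*}H_{P'}$) yields $0 \to \mc{E} \to \mc{O}_{\P^{1}}(-d)^{\oplus 2}\oplus\mc{O}_{\P^{1}} \to \C^{\oplus 2} \to 0$, so $\deg\mc{E} = -2d-2$. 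Second, since $L$ is a fibre of $H \cong \F_{0}$, one gets $H_{P'}\cong\F_{0}$; the inclusion $H_{P'}\subset P'$ then corresponds to $0 \to \mc{O}_{\P^{1}} \to \mc{E} \to \mc{O}_{\P^{1}}(-a)^{\oplus 2} \to 0$ for some $a$. Comparing degrees gives $a=d+1$, the sequence splits, and both conclusions follow at once.

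Your Cox-ring sketch, by contrast, asserts that after blowing up $\{t_{2}=x_{3}=0\}$ and contracting the strict transform of the fibre one lands in $\F(-d-1,-d-1,0)$, but never performs the mutation of weights that would establish this; and your fallback symmetry argument (``$x_{1},x_{2}$ play symmetric roles, so the bundle stays balanced'') is a heuristic, not a proof --- an involution of $P'$ over $\P^{1}$ does not by itself force two summands of the underlying rank-$3$ bundle to coincide without tracing how it acts on $\mc{E}$. Your final $\Pic$ chase presupposes the identification $P'\cong\F(-d-1,-d-1,0)$ rather than proving it, so it only confirms $H_{P'}\sim\xi_{d+1,d+1}$ after the fact. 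The toric route can certainly be made to work (the elementary link is a toric map between toric varieties once you normalise coordinates), but you would need to actually write down the fan or the graded Cox ring of $\wt P$ and identify the image of $\psi$; the paper's sheaf-theoretic argument sidesteps all of this and is considerably shorter.
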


\begin{proof}
Set $\mc{E}=p'_{*}\mc{O}_{P'}(H_{P'})$. 
It suffices to show that $\mc{E} = \mc{O}_{\P^{1}}(-d-1)^{\oplus 2} \oplus \mc{O}_{\P^{1}}$.
Pushing forward the standard exact sequence
\begin{equation}\label{eq:lineblowup1}
\xymatrix@C=15pt@R=15pt{
0 \ar[r]
&{\mc{O}_{\wt P}(\vp^{*}H-E_{\vp})} \ar[r]
&{\mc{O}_{\wt P}(\vp^{*}H)} \ar[r]
&{\mc{O}_{E_{\vp}}(\vp^{*}H|_{E_{\vp}})} \ar[r]
&0
}
\end{equation}
by $p \circ \vp$, we get the following exact sequence
\begin{equation}\label{eq:lineblowup2}
\xymatrix@C=15pt@R=15pt{
0 \ar[r]
&\mc{E} \ar[r]
&{\mc{O}_{\P^{1}}(-d)^{\oplus 2} \oplus \mc{O}_{\P^{1}}} \ar[r]
&\C^{\oplus 2}\ar[r]
&0
}
\end{equation}
since $\vp^{*}H-E_{\vp} \sim \psi^{*} (H_{P'})$ by Theorem \ref{thm:p-p} (2).
On the other hand, we have $H_{P'} \cong \F_{0}$ because $L \subset H$ and $H \cong \F_{0}$.
By the definition of $\mc{E}$, the inclusion $H_{P'} \subset P'$ corresponds to the exact sequence
\begin{equation}\label{eq:lineblowup3}
\xymatrix@C=15pt@R=15pt{
0 \ar[r]
&\mc{O}_{\P^{1}} \ar[r]
&\mc{E} \ar[r]
&\mc{O}_{\P^{1}}(-a)^{\oplus 2} \ar[r]
&0
}
\end{equation}
for some $a \in \Z$.
Combining $(\ref{eq:lineblowup2})$ and $(\ref{eq:lineblowup3})$, we obtain $-2a=\deg \mc{E}=-2d-2$.
Hence $a=d+1$ and $(\ref{eq:lineblowup3})$ splits, which proves the lemma.
\end{proof}

\begin{lem}\label{lem:lineequiv}
We follow the situation of Lemma \ref{lem:lineblowup}.
Set $\infty \coloneqq p(L) \in C$.
If $P$ admits a $\G_{a}^{3}$-structure with the boundary divisor $H \cup p^{*}(\infty)$,
then so does $P'$ with the boundary divisor $H_{P'} \cup p'^{*}(\infty)$.
\end{lem}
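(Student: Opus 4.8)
The plan is to reduce the statement to the single observation that the blow-up centre $L$ of $\vp$ is $\G_{a}^{3}$-stable. To verify this, note that since $\infty=p(L)$ and $L$ is a linear subspace of a $p$-fiber, $L$ lies in the fiber $p^{*}(\infty)\cong\P^{2}$. As $H\in|\xi_{d,d}|$ and $\xi_{d,d}$ is a tautological divisor, the restriction of $H$ to $p^{*}(\infty)$ is a line; since the line $L$ is contained in $H\cap p^{*}(\infty)$, we conclude $L=H\cap p^{*}(\infty)$. Now $H$ and $p^{*}(\infty)$ are the two irreducible components of the boundary divisor of the given $\G_{a}^{3}$-structure on $P$, hence each is $\G_{a}^{3}$-stable because $\G_{a}^{3}$ is connected, and therefore so is their intersection $L$.

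Next, exactly as in Step $1$ of the proof of Proposition \ref{prop:2}, the $\G_{a}^{3}$-stability of $L$ equips $\wt{P}=\Bl_{L}P$ with a unique $\G_{a}^{3}$-action for which $\vp$ is equivariant. Since $\vp$ is an isomorphism over $P\setminus L$ and $L\subset H$, it restricts to a $\G_{a}^{3}$-equivariant isomorphism from $U\coloneqq\wt{P}\setminus\bigl(E_{\vp}\cup H_{\wt{P}}\cup(p^{*}(\infty))_{\wt{P}}\bigr)$ onto the open orbit $P\setminus\bigl(H\cup p^{*}(\infty)\bigr)\cong\G_{a}^{3}$ of $P$; in particular $\wt{P}$ is itself a $\G_{a}^{3}$-variety with dense open orbit $U$. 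Since $\psi$ is a divisorial contraction by Theorem \ref{thm:p-p}, Theorem \ref{thm:bra} now endows $P'$ with a unique $\G_{a}^{3}$-action making $\psi$ equivariant.

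It remains to identify $\psi(U)$ with $P'\setminus\bigl(H_{P'}\cup p'^{*}(\infty)\bigr)$: granting this, $\psi(U)$ is open, dense, $\G_{a}^{3}$-stable and equivariantly isomorphic to $\G_{a}^{3}$, hence is the dense open orbit of $P'$, and the lemma follows. By Theorem \ref{thm:p-p}~(2) we have $E_{\psi}=(p^{*}(\infty))_{\wt{P}}$, one of the three components of $\wt{P}\setminus U$; since $\psi$ is the blow-up along the point $L'\coloneqq\psi(E_{\psi})$, it is an isomorphism over $P'\setminus L'$ and contracts only $E_{\psi}$, so $\psi(E_{\vp})$ and $\psi(H_{\wt{P}})=H_{P'}$ are prime divisors and $\psi(U)=P'\setminus\bigl(\psi(E_{\vp})\cup H_{P'}\cup\{L'\}\bigr)$. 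Finally, from $p'\circ\psi=p\circ\vp$ and $\vp(E_{\vp})=L\subset p^{*}(\infty)$ one sees that the prime divisor $\psi(E_{\vp})$ is contained in, hence equal to, $p'^{-1}(\infty)=p'^{*}(\infty)$, and likewise $L'\in p'^{*}(\infty)$; therefore $\psi(U)=P'\setminus\bigl(H_{P'}\cup p'^{*}(\infty)\bigr)$, as desired.

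The main obstacle is the very first step — recognising that $L=H\cap p^{*}(\infty)$, which is what forces $L$ to be $\G_{a}^{3}$-stable; everything afterwards is a formal diagram chase resting on Theorems \ref{thm:bra} and \ref{thm:p-p} together with the set-up of Lemma \ref{lem:lineblowup}. The only other point demanding a little care is checking that the locus contracted by $\psi$ is a boundary component of $\wt{P}$, so that $\psi$ restricts to an open immersion on $U$, together with the identification $\psi(E_{\vp})=p'^{*}(\infty)$.
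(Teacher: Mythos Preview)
Your proof is correct and follows exactly the same approach as the paper: you observe that $L=H\cap p^{*}(\infty)$ is $\G_{a}^{3}$-stable, lift the action to $\wt{P}$, and then descend to $P'$ via Theorem~\ref{thm:bra}. The paper's proof is a terse three lines that asserts these same steps; you have simply filled in the details, in particular the justification that $H\cap p^{*}(\infty)$ is a line (so equals $L$) and the explicit identification of the boundary divisor on $P'$ via $E_{\psi}=(p^{*}(\infty))_{\wt{P}}$ and $\psi(E_{\vp})=p'^{*}(\infty)$.
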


\begin{proof}
Since $L=H \cap p^{*}(\infty)$, this is $\G_{a}^{3}$-stable.
Hence $\wt P$ admits a $\G_{a}^{3}$-structure with the boundary divisor $H_{\wt P} \cup (p \circ \vp)^{*}(\infty)$.
Applying Theorem \ref{thm:bra} to $\psi \colon \wt P \to P'$, we obtain a desired $\G_{a}^{3}$-structure on $P'$.
\end{proof}

\begin{lem}\label{lem:ptblowup}
We follow the situation of Theorem \ref{thm:p-p}.
Suppose that $P = \F(-d_{1}, -d_{2}, 0)$ with $d_{1} \geq d_{2} \geq 0$ and $n=0$.
Assume that there exists $H \in |\xi_{d_{1}, d_{2}}|$ containing $L$, and when $d_{1}>d_{2}$, assume that the negative section of $H \cong \F_{d_{1}-d_{2}}$ passes through $L$ in addition.
Then $P' \cong \F(-d_{1}-1, -d_{2}, 0)$ and $H_{P'} \sim \xi_{d_{1}+1, d_{2}}$.
\end{lem}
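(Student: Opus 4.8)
The plan is to follow the strategy of Lemma~\ref{lem:lineblowup}. Set $\mc{E} \coloneqq p'_{*}\mc{O}_{P'}(H_{P'})$; it suffices to prove $\mc{E} \cong \mc{O}_{\P^{1}}(-d_{1}-1)\oplus\mc{O}_{\P^{1}}(-d_{2})\oplus\mc{O}_{\P^{1}}$, since then $P' \cong \P(\mc{E}) \cong \F(-d_{1}-1,-d_{2},0)$ and $H_{P'} = \P(\mc{E}/\mc{O}_{\P^{1}})$ is cut out by the trivial summand, hence $H_{P'} \sim \xi_{d_{1}+1,d_{2}}$.

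Write $\infty \coloneqq p(L)$ and let $F_{L} \coloneqq p^{*}(\infty) \cong \P^{2}$ be the $p$-fiber through $L$, so that $\ell_{0} \coloneqq H \cap F_{L}$ is a line in $F_{L}$ and the fibre of the $\P^{1}$-bundle $p|_{H} \colon H \to C$ over $\infty$. Since $L$ is a smooth point of the smooth surface $H$, one has $\vp^{*}H = H_{\wt P} + E_{\vp}$ and $\vp^{*}H|_{E_{\vp}} \cong \mc{O}_{\P^{2}}$ ($\mc{O}_{P}(H)$ being pulled back from the point $L$), while $\vp^{*}H - E_{\vp} = H_{\wt P} \sim \psi^{*}H_{P'}$ by Theorem~\ref{thm:p-p}(2), the divisor $H_{\wt P}$ being $\psi$-trivial because $H_{\wt P} \cap E_{\psi}$ is the strict transform of $\ell_{0}$, a fibre of $E_{\psi} \to L'$. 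Pushing forward the standard sequence $0 \to \mc{O}_{\wt P}(\vp^{*}H - E_{\vp}) \to \mc{O}_{\wt P}(\vp^{*}H) \to \mc{O}_{E_{\vp}} \to 0$ along $p \circ \vp = p' \circ \psi$, and using $\vp_{*}\mc{O}_{\wt P}(\vp^{*}H) = \mc{O}_{P}(H)$, $p_{*}\mc{O}_{P}(\xi_{d_{1},d_{2}}) = \mc{O}_{\P^{1}}(-d_{1})\oplus\mc{O}_{\P^{1}}(-d_{2})\oplus\mc{O}_{\P^{1}}$, $R^{1}p_{*}\mc{O}_{P}(\xi_{d_{1},d_{2}}) = 0$ and $(p\circ\vp)_{*}\mc{O}_{E_{\vp}} \cong \C_{\infty}$ (a length-one skyscraper at $\infty$), I obtain $0 \to \mc{E} \to \mc{O}_{\P^{1}}(-d_{1})\oplus\mc{O}_{\P^{1}}(-d_{2})\oplus\mc{O}_{\P^{1}} \to \C_{\infty} \to 0$; in particular $\deg\mc{E} = -d_{1}-d_{2}-1$.

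To pin down $\mc{E}$ I first identify $H_{P'}$. The restriction $\psi|_{H_{\wt P}} \colon H_{\wt P} = \Bl_{L}H \to H_{P'}$ contracts exactly one curve, the strict transform of $\ell_{0}$ (a $(-1)$-curve, since $\ell_{0}^{2} = 0$ in $H$), because $H_{\wt P} \cap E_{\psi}$ is a single fibre of $E_{\psi} \to L'$; thus $\psi|_{H_{\wt P}}$ exhibits $H_{P'}$ as the surface obtained from $H \cong \F_{d_{1}-d_{2}}$ by blowing up $L$ and blowing down the strict transform of the fibre through $L$. When $d_{1} > d_{2}$ the assumption that $L$ lies on the negative section of $H$ forces this to raise the invariant, and when $d_{1} = d_{2}$ it turns $\F_{0}$ into $\F_{1}$; hence $H_{P'} \cong \F_{d_{1}-d_{2}+1}$. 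In particular $H_{P'}$ contains no $p'$-fiber, so the tautological section cutting out $H_{P'}$ defines a subbundle inclusion $\mc{O}_{\P^{1}} \hookrightarrow \mc{E}$ with locally free rank-two quotient $\mc{Q}$ satisfying $H_{P'} = \P(\mc{Q})$; thus $\mc{Q} \cong \mc{O}_{\P^{1}}(a)\oplus\mc{O}_{\P^{1}}(a - d_{1} + d_{2} - 1)$ for some $a$. Comparing degrees in $0 \to \mc{O}_{\P^{1}} \to \mc{E} \to \mc{Q} \to 0$ with $\deg\mc{E} = -d_{1}-d_{2}-1$ forces $a = -d_{2}$, so $\mc{Q} \cong \mc{O}_{\P^{1}}(-d_{2})\oplus\mc{O}_{\P^{1}}(-d_{1}-1)$; and since $\Ext^{1}(\mc{Q},\mc{O}_{\P^{1}}) = H^{1}(\P^{1}, \mc{O}_{\P^{1}}(d_{2})\oplus\mc{O}_{\P^{1}}(d_{1}+1)) = 0$, this extension splits, whence $\mc{E} \cong \mc{O}_{\P^{1}}(-d_{1}-1)\oplus\mc{O}_{\P^{1}}(-d_{2})\oplus\mc{O}_{\P^{1}}$.

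The main obstacle is the identification $H_{P'} \cong \F_{d_{1}-d_{2}+1}$: one must verify that $\psi|_{H_{\wt P}}$ contracts precisely the strict transform of the fibre $\ell_{0}$ through $L$ (and that the exceptional curve of $\Bl_{L}H$, lying in the $\psi$-non-contracted divisor $E_{\vp}$, survives), and then keep track of how the invariant of a Hirzebruch surface changes under an elementary transformation centred on, versus off, the negative section. This is exactly the point at which the hypothesis on $L$ is indispensable: without it the same argument would yield $\F(-d_{1},-d_{2}-1,0)$ instead of $\F(-d_{1}-1,-d_{2},0)$.
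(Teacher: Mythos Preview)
Your argument is correct and follows the same route as the paper's proof: define $\mc{E}=p'_{*}\mc{O}_{P'}(H_{P'})$, obtain the short exact sequence $0\to\mc{E}\to\mc{O}_{\P^{1}}(-d_{1})\oplus\mc{O}_{\P^{1}}(-d_{2})\oplus\mc{O}_{\P^{1}}\to\C\to 0$ by pushing forward, identify $H_{P'}\cong\F_{d_{1}-d_{2}+1}$ from the hypothesis on $L$, and then split the sequence $0\to\mc{O}_{\P^{1}}\to\mc{E}\to\mc{O}_{\P^{1}}(-d_{1}-1)\oplus\mc{O}_{\P^{1}}(-d_{2})\to 0$. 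You supply considerably more detail than the paper (which simply invokes the analogy with Lemma~\ref{lem:lineblowup}), in particular the explicit description of $\psi|_{H_{\wt P}}$ as an elementary transformation of the Hirzebruch surface $H$ and the determination of the twist $a=-d_{2}$ from the degree count; these are exactly the points the paper leaves implicit.
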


\begin{proof}
Set $\mc{E}=p'_{*}\mc{O}_{P'}(H_{P'})$. 
It suffices to show that $\mc{E} = \mc{O}_{\P^{1}}(-d_{1}-1) \oplus \mc{O}_{\P^{1}}(-d_{2}) \oplus \mc{O}_{\P^{1}}$.
By similar arguments as in Lemma \ref{lem:lineblowup}, we get the exact sequence
\begin{equation}\label{eq:ptblowup2}
\xymatrix@C=15pt@R=15pt{
0 \ar[r]
&\mc{E} \ar[r]
&{\mc{O}_{\P^{1}}(-d_{1}) \oplus \mc{O}_{\P^{1}}(-d_{2}) \oplus \mc{O}_{\P^{1}}} \ar[r]
&\C \ar[r]
&0.
}
\end{equation}
Hence $\deg \mc{E}=-d_{1}-d_{2}-1$.
On the other hand, we have $H_{P'} \cong \F_{d_{1}-d_{2}+1}$ by the choice of $L$.
By the definition of $\mc{E}$, the inclusion $H_{P'} \subset P'$ corresponds to the exact sequence
\begin{equation}\label{eq:ptblowup3}
\xymatrix@C=15pt@R=15pt{
0 \ar[r]
&\mc{O}_{\P^{1}} \ar[r]
&\mc{E} \ar[r]
&\mc{O}_{\P^{1}}(-d_{1}-1) \oplus \mc{O}_{\P^{1}}(-d_{2}) \ar[r]
&0.
}
\end{equation}
Since $(\ref{eq:ptblowup3})$ splits, we get the assertion.
\end{proof}

\begin{lem}\label{lem:ptequiv}
We follow the situation of Lemma \ref{lem:ptblowup}.
Set $\infty \coloneqq p(L) \in C$.
If $P$ admits a $\G_{a}^{3}$-structure with the boundary divisor $H \cup p^{*}(\infty)$ such that $L$ is a fixed point,
then so does $P'$ with the boundary divisor $H_{P'} \cup p'^{*}(\infty)$.
\end{lem}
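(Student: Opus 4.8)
The plan is to argue exactly as in the proof of Lemma~\ref{lem:lineequiv}; the only difference is that here the centre $L$ of the first blow-up is a point rather than a curve, so one genuinely needs the hypothesis that $L$ is a $\G_{a}^{3}$-fixed point in order to lift the action along $\vp$. Being a fixed point, $L$ is a $\G_{a}^{3}$-invariant closed subscheme of $P$, so $\vp\colon \wt P = \Bl_{L}P \to P$ is equivariant for a unique $\G_{a}^{3}$-action on $\wt P$. Since $\infty = p(L)$ and $L \subset H$, we have $L \subset H \cap p^{*}(\infty)$, hence $\vp$ restricts to an isomorphism from $\vp^{-1}(U)$ onto $U := P \setminus (H \cup p^{*}(\infty)) \cong \G_{a}^{3}$, the dense orbit of $P$. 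Therefore $\vp^{-1}(U)$ is the dense orbit of $\wt P$, and its complement is the reduced divisor $H_{\wt P} \cup (p^{*}(\infty))_{\wt P} \cup E_{\vp}$, that is, $H_{\wt P}$ together with the reduced fibre $(p \circ \vp)^{*}(\infty)$.

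Next I would apply Theorem~\ref{thm:bra} to $\psi \colon \wt P \to P'$ to obtain a $\G_{a}^{3}$-action on $P'$ making $\psi$ equivariant, and then check that its boundary divisor is precisely $H_{P'} \cup p'^{*}(\infty)$. By Theorem~\ref{thm:p-p}~(2) the divisor $E_{\psi}$ contracted by $\psi$ is the strict transform of the $p$-fibre $p^{*}(\infty)$, and $L' = \psi(E_{\psi}) \subset p'^{-1}(\infty)$; hence $\psi$, like $\vp$, is an isomorphism over $C \setminus \{\infty\}$, and under the two resulting identifications the divisors $H$, $H_{\wt P}$, $H_{P'}$ all restrict to the same divisor over $C \setminus \{\infty\}$. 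Since $\vp^{-1}(U) \subset (p \circ \vp)^{-1}(C \setminus \{\infty\})$, this gives $\psi(\vp^{-1}(U)) = P' \setminus (H_{P'} \cup p'^{*}(\infty))$, which is therefore the dense $\G_{a}^{3}$-orbit of $P'$; equivalently, $p' \circ \psi = p \circ \vp$ maps $E_{\vp}$ onto $\{\infty\}$, so the strict transform $(E_{\vp})_{P'}$ fills out the whole fibre $p'^{*}(\infty) \cong \P^{2}$. Thus $P'$ carries a $\G_{a}^{3}$-structure with boundary divisor $H_{P'} \cup p'^{*}(\infty)$, as claimed.

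I do not anticipate a serious obstacle; the one point that needs a little care is the bookkeeping of the boundary divisor through $\psi$, which contracts the component $(p^{*}(\infty))_{\wt P} = E_{\psi}$ of the boundary of $\wt P$, so one must verify that the surviving components reassemble — via the comparison over $C \setminus \{\infty\}$ together with the equality $p' \circ \psi = p \circ \vp$ — into exactly $H_{P'} \cup p'^{*}(\infty)$ (in particular, that the new fibre component $p'^{*}(\infty)$ is the strict transform of $E_{\vp}$).
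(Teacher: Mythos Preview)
Your proposal is correct and follows exactly the paper's approach: the paper's proof simply notes that $L$ is $\G_{a}^{3}$-stable by hypothesis and then refers back to the argument of Lemma~\ref{lem:lineequiv}, which is precisely what you do (with some extra bookkeeping on the boundary divisor that the paper leaves implicit).
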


\begin{proof}
Since $L$ is $\G_{a}^{3}$-stable by assumption, we can prove the assertion in much the same way as Lemma \ref{lem:lineequiv}.
\end{proof}

Now we can prove that Theorem \ref{thm:maineq} (2) implies (1).

\begin{proof}[Proof of Theorem \ref{thm:maineq} $(2) \Rightarrow (1)$]

In $\P^{1}_{[t_{1}:t_{2}]} \times \P^{2}_{[x_{1}:x_{2}:x_{3}]}$, set $E \coloneqq \{x_{3}=0\}$ and $F \coloneqq \{t_{1}=0\}$. Write $\infty \coloneqq [0:1] \in \P^{1}$.
Then $E$ and $F$ generate $\Lambda_{\mathrm{eff}}(\P^{1} \times \P^{2})$.
By \cite[Lemma 3.7]{H-M}, $\P^{1} \times \P^{2}$ admits a $\G_{a}^{3}$-structure with the boundary divisor $E \cup F$.
Write this structure as $\rho \colon \G_{a}^{3} \curvearrowright \P^{1} \times \P^{2}$.

Now suppose that $(2)$ follows.
Then $X \cong \F(-d_{1}, -d_{2}, 0)$ for some $d_{1} \geq d_{2} \geq 0$ and $f=p_{d_{1}, d_{2}}$.
By assumption and Lemma \ref{lem:unique}, it holds that $D_{1} \sim \xi_{d_{1}, d_{2}}$ and $D_{2}$ is a $p_{d_{1}, d_{2}}$-fiber.

Suppose that $d_{1}=d_{2}=0$. Then we may assume that $(D_{1}, D_{2})=(E, F)$ by Lemma \ref{lem:unique} and hence $\rho$ is a desired structure.
 
Suppose that $d_{1}=d_{2} > 0$.
Then by Lemma \ref{lem:lineblowup}, we can inductively construct the sequence of the elementary links from $p_{0,0} \colon \P^{1} \times \P^{2} \to \P^{1}$:
\begin{equation}
\xymatrix@C=15pt@R=15pt{
{\P^{1} \times \P^{2}} \ar@{.>}[r]^-{h_{0}} \ar[d]_{p_{0,0}}
&{\F(-1, -1, 0)} \ar@{.>}[r]^-{h_{1}} \ar[d]_{p_{1,1}}
&{\cdots} \ar@{.>}[r]^-{h_{d_{1}-1}} 
&{\F(-d_{1}, -d_{1}, 0)} \ar[d]_{p_{d_{1}, d_{1}}=f} \ar@{=}[r]
&{X}
\\ \P^{1} \ar@{=}[r]
&\P^{1} \ar@{=}[r]
&{\cdots} \ar@{=}[r]
&\P^{1},
&
}
\end{equation}
where the center of $h_{i}$ is the intersection of $E_{i} \coloneqq E_{\F(-i, -i, 0)}$ and $F_{i} \coloneqq p_{i,i}^{*}(\infty)$ for $0 \leq i \leq d_{1}-1$.
Set $E_{d_{1}} \coloneqq E_{X}$ and $F_{d_{1}} \coloneqq f^{*}(\infty)$.
Then $E_{i} \sim \xi_{i,i}$ for $0 \leq i \leq d_{1}$ by Lemma \ref{lem:lineblowup} and hence
we may assume that $(D_{1}, D_{2}) = (E_{d_{1}}, F_{d_{1}})$ by Lemma \ref{lem:unique}.

For $0 \leq i \leq d_{1}-1$, suppose that $\F(-i, -i, 0)$ admits a $\G_{a}^{3}$-structure with the boundary divisor $E_{i} \cup F_{i}$.
Then so does $\F(-(i+1), -(i+1), 0)$ with the boundary divisor $E_{i+1} \cup F_{i+1}$ by Lemma \ref{lem:lineequiv}.
Thus $\rho$ induces a desired $\G_{a}^{3}$-structure on $X$.

Suppose that $d_{1}>d_{2} \geq 0$. 
Set $d=d_{1}-d_{2}$.
Let $\rho'$ be a $\G_{a}^{3}$-structure of $\F(-d_{2}, -d_{2}, 0)$, which we have already constructed. Write its boundary divisor as $E' \cup F'$ such that $E' \sim \xi_{d_{2}, d_{2}}$ and $F'=p_{d_{2}, d_{2}}^{*}(\infty)$. 
By the Borel fixed-point theorem, there is a $\G_{a}^{3}$-fixed point in $E' \cap F'$, say $t_{0}$.
Then by Lemma \ref{lem:ptblowup}, we can inductively construct the sequence of the elementary links from $p_{d_{2}, d_{2}} \colon \F(-d_{2}, -d_{2}, 0) \to \P^{1}$:
\begin{equation}
\xymatrix@C=15pt@R=15pt{
{\F(-d_{2}, -d_{2}, 0)} \ar@{.>}[r]^-{h_{0}} \ar[d]_{p_{d_{2}, d_{2}}}
&{\F(-d_{2}-1, -d_{2}, 0)} \ar@{.>}[r]^-{h_{1}} \ar[d]_{p_{d_{2}+1, d_{2}}}
&{\cdots} \ar@{.>}[r]^-{h_{d-1}} 
&{\F(-d_{1}, -d_{2}, 0)} \ar[d]_{p_{d_{1}, d_{2}}=f}\ar@{=}[r]
&X
\\ \P^{1} \ar@{=}[r]
&\P^{1} \ar@{=}[r]
&{\cdots} \ar@{=}[r]
&\P^{1},
&
}
\end{equation}
where the center of $h_{i}$ is $t_{0}$ for $i=0$ and the intersection of the negative section of $E'_{i} \coloneqq E'_{\F(-d_{2}-i, -d_{2}, 0)} \cong \F_{i}$ and $F'_{i} \coloneqq p_{d_{2}+i, d_{2}}^{*}(\infty)$ for $1 \leq i \leq d-1$.
Set $E'_{d} \coloneqq E'_{X}$ and $F'_{d} \coloneqq f^{*}(\infty)$.
Then $E'_{i} \sim \xi_{d_{2}+i,d_{2}}$ for $0 \leq i \leq d$ by Lemma \ref{lem:ptblowup} and hence
we may assume that $(D_{1}, D_{2}) = (E'_{d}, F'_{d})$ by Lemma \ref{lem:unique}.


Since $t_{0}$ is a fixed point of the action $\rho'$, $\F( -d_{2}-1, -d_{2}, 0)$ admits a $\G_{a}^{3}$-structure with the boundary divisor $E'_{1} \cup F'_{1}$ by Lemma \ref{lem:ptequiv}.

For $1 \leq i \leq d-1$, suppose that $\F(-d_{2}-i, -d_{2}, 0)$ admits a $\G_{a}^{3}$-structure with the boundary divisor $E'_{i} \cup F'_{i}$.
Then $t_{i}$ is a $\G_{a}^{3}$-fixed point by construction.
Hence $\F( -d_{2}-(i+1), -d_{2}, 0)$ admits a $\G_{a}^{3}$-structure with the boundary divisor $E'_{i+1} \cup F'_{i+1}$ by Lemma \ref{lem:ptequiv}.
Thus $\rho'$ induces a desired $\G_{a}^{3}$-structure on $X$.
\end{proof}


\section*{Acknowledgement}

The author is greatly indebted to Professor Hiromichi Takagi, his supervisor, for his encouragement, comments, and suggestions. 
He also wishes to express his gratitude to Professor Adrien Dubouloz for drawing his attention to \cite{H-M}.
He is also grateful to Doctor Katsuhisa Furukawa for his helpful comments.

This work was supported by JSPS KAKENHI Grant Number JP19J14397 and 
the Program for Leading Graduate Schools, MEXT, Japan.

\end{document}